\renewenvironment{frontmatter}
{\thispagestyle{plain}
}
{\vskip 20pt%
\blfootnote{\raggedright \ifnum\@authorcount=1\textit{E-mail address:}\else\textit{E-mail addresses:}\fi ~\@emails}%
}
\renewenvironment{abstract}
{
\hrule height 0.25pt
\vskip 5pt
\noindent \textbf{Abstract}
\vskip 5pt
}
{
\vskip 5pt
\noindent \textit{\small Keywords:~\@keywords}

\vskip 3pt
\noindent \textit{\small Math.\ Subj.\ Class.: \@msc}
\vskip 5pt
\hrule height 0.25pt
} 
\def\@oddrunninghead{S.\,Alimirzaei and D.\,W.\,Morris}
\def\@evenrunninghead{Colour-permuting automorphisms of complete Cayley graphs}
\theoremstyle{definition}
\newtheorem*{ack}{Acknowledgments}
\newtheorem{notation}[thm]{Notation}
\newtheorem{note}[thm]{Note}
\newtheorem*{note*}{Note}
\newenvironment{thmref}[1]{\begingroup}{\endgroup}
\newcounter{caseholder} 
\newcounter{case}
\numberwithin{case}{caseholder}
\newenvironment{case}[1][\unskip]{\refstepcounter{case}\normalfont\bfseries\sffamily
\unskip\par\medskip \indent Case \thecase\ #1.\ \it}{\unskip\upshape}
\renewcommand{\thecase}{\arabic{case}}
\newcounter{subcase}
\numberwithin{subcase}{case}
\newcounter{step}
\numberwithin{step}{stepholder}
\renewcommand{\thestep}{\arabic{step}}
\newcommand{\1}{\mathbf{1}}
\newcommand{\caut}{\mathcal{A}}
\newcommand{\compose}{\mathbin{\circ}}
\newcommand{\edge}{\mathbin{\hbox{\vrule height 3 pt depth -2.25pt width 10pt}}}
\DeclareMathOperator{\id}{id}
\newcommand{\iso}{\cong}
\newcommand{\Z}{\mathbb{Z}}
\DeclareMathOperator{\Aut}{Aut}
\DeclareMathOperator{\Cay}{Cay}
\DeclareMathOperator{\Dic}{Dic}
\newcommand{\pref}[1]{(\ref{#1})}
\newcommand{\fullref}[2]{\ref{#1}\pref{#1-#2}}
\newcounter{saveenumi}
\newcommand{\MR}[1]{\href{https://mathscinet.ams.org/mathscinet-getitem?mr=#1}{MR#1}}
\begin{document}

\begin{frontmatter}   

\titledata{Colour-permuting automorphisms of complete Cayley graphs}
{} 

\begin{center}
\emph{To Dragan Maru\v{s}i\v{c} on his 70th birthday}
\bigskip 
\end{center}

\authordatatwo{Shirin Alimirzaei}{shirin.alimirzaei@uleth.ca}{}
{Dave Witte Morris}{dmorris@deductivepress.ca,
    https://deductivepress.ca/dmorris/}{}
{Department of Mathematics and  Computer Science, 
University of Lethbridge, 
\newline 4401 University Drive, 
Lethbridge, Alberta, T1K~3M4, Canada}

\keywords{Cayley graph, automorphism, colour-permuting, complete graph}
\msc{05C25, 05C15}

\begin{abstract}
Let $G$ be a (finite or infinite) group, and let $K_G = \Cay \bigl( G;G \smallsetminus \{\1\} \bigr)$ be the complete graph with vertex set~$G$, considered as a Cayley graph of~$G$. Being a Cayley graph, it has a natural edge-colouring by sets of the form $\{s, s^{-1}\}$ for $s \in G$. We prove that every colour-permuting automorphism of~$K_G$ is an affine map, unless $G \iso Q_8 \times B$, where $Q_8$ is the quaternion group of order~$8$, and $B$ is an abelian group, such that $b^2$ is trivial for all $b \in B$.

We also prove (without any restriction on~$G$) that every colour-permuting automorphism of $K_G$ is the composition of a group automorphism and a colour-preserving graph automorphism. This was conjectured by D.\,P.\,Byrne, M.\,J.\,Donner, and T.\,Q.\,Sibley in 2013.
\end{abstract}

\end{frontmatter}

\setcounter{section}{-1}

\section{Preliminaries}

The statements of our main results use the following standard notation and terminology.

\begin{defn}[{\cite[pp.~189--191]{HKMM-ColPres}}]
Let $S$ be an inverse-closed subset of a group~$G$. (This means that $s \in S \Rightarrow s^{-1} \in S$.)
	\begin{enumerate}
 
	\item The \emph{Cayley graph} of~$G$, with respect to~$S$, is the graph $\Cay(G;S)$ whose vertices are the elements of~$G$, and with an edge from~$g$ to~$gs$, for each $g \in G$ and $s \in S$.
 
	\item $\Cay(G;S)$ has a natural edge-colouring. Namely, each edge of the form $g \edge gs$ is coloured with the set $\{s,s^{-1}\}$. (In order to make the colouring well-defined, it is necessary to include $s^{-1}$, because $g \edge gs$ is the same as the edge $gs \edge g$, which is of the form $h \edge hs^{-1}$, with $h = gs$.)
 
	\item A function $\varphi \colon G \to G$ is said to be \emph{affine} if it is the  composition of an automorphism of~$G$ with left translation by an element of~$G$. This means $\varphi(x) = \alpha(gx)$, for some $\alpha \in \Aut G$ and $g \in G$.
 
	\item A Cayley graph $\Cay(G;S)$ is \emph{CCA} if all of its colour-preserving automorphisms are affine functions on~$G$.
(CCA is an abbreviation for the \emph{Cayley Colour Automorphism} property.)

	\item We say that $G$ is \emph{CCA} if every connected Cayley graph of~$G$ is CCA.
 
	\item An automorphism $\varphi$ of a Cayley graph $\Cay(G;S)$ is \emph{colour-permuting} if it respects the colour classes; this means that if two edges have the same colour, then their images under~$\varphi$ must also have the same colour. Hence, there is a permutation $\pi$ of~$S$, such that $\varphi(gs) \in \{ \varphi(g) \, \pi(s)^{\pm1} \}$ for all $g \in G$ and $s \in S$
	 (and $\pi(s^{-1}) = \pi(s)^{-1}$).
  
	\item $\Cay(G;S)$ is \emph{strongly CCA} if all of its colour-permuting automorphisms are affine functions.
 
	\item We say that $G$ is \emph{strongly CCA} if every connected Cayley graph of~$G$ is strongly CCA.
 
    \item $\Cay(G;S)$ is \emph{normal} if every automorphism of $\Cay(G;S)$ is an affine function (cf.~\cite[Definition~1]{Wang} and \cite[Lemma~2.1]{Godsil-full}).
    (\!\emph{Warning:} This terminology is not completely standard, because some graph theorists use ``normal'' to mean that the connection set~$S$ is a union of conjugacy classes \cite[p.~2]{NormalCayleyGraphs}.)
	\end{enumerate}
\end{defn}

\section{Introduction}

Many groups are CCA, but not strongly CCA. (The smallest example is the dihedral group of order 12 \cite[Prop.~7.3]{HKMM-ColPres}.) This means there are groups~$G$, such that every connected Cayley graph of~$G$ is CCA, but not every connected Cayley graph of~$G$ is strongly CCA. The following result shows that this is not true if we replace the universal quantifier with the existential quantifier; that is, it shows that if there exists a Cayley graph of~$G$ that is CCA, then there exists a Cayley graph of~$G$ that is strongly CCA. (Specifically, the complete graph on~$G$ is strongly CCA.)

\begin{thm} \label{equiv}
If $G$ is a \textup(finite or infinite\textup) group, then the following are equivalent:
    \begin{enumerate}
    \item \label{equiv-hasStrong}
    $G$ has a Cayley graph that is strongly CCA.
    \item \label{equiv-isStrong}
    The complete graph $\Cay \bigl( G; G \smallsetminus \{\1\} \bigr)$ is strongly CCA.
    \item \label{equiv-hasCCA}
    $G$ has a Cayley graph that is CCA.
    \item \label{equiv-isCCA}
    The complete graph $\Cay \bigl( G; G \smallsetminus \{\1\} \bigr)$ is CCA.
    \item \label{equiv-notHam}
    $G$ is \underline{not} of the form $Q_8 \times B$, where $Q_8$ is the quaternion group of order~$8$, and $B$ is an abelian group, such that $b^2$ is trivial for all $b \in B$.
    \setcounter{saveenumi}{\arabic{enumi}}
    \end{enumerate}
Furthermore, if $G$ is finite, then these are also equivalent to:
    \begin{enumerate} \setcounter{enumi}{\arabic{saveenumi}}
    \item \label{equiv-hasNormal}
    Either $G$ has a Cayley graph that is normal, or $G \iso \Z_4 \times \Z_2$.
    \end{enumerate}
\end{thm}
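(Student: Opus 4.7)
The plan is to close a cycle of implications, with the substantive content concentrated in $(5) \Rightarrow (2)$ (the main theorem advertised in the abstract); the remaining links are either immediate from the definitions or are known results from the CCA literature.

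First I would dispose of the easy implications. Since $\Cay(G; G \smallsetminus \{\1\})$ is itself a Cayley graph of~$G$, the implications $(2) \Rightarrow (1)$ and $(4) \Rightarrow (3)$ are trivial. Every colour-preserving automorphism is colour-permuting (its induced permutation of $S$ is the identity), so ``strongly CCA'' always implies ``CCA''; this gives $(1) \Rightarrow (3)$ and $(2) \Rightarrow (4)$. The main direction $(5) \Rightarrow (2)$ is precisely the statement advertised in the abstract, which I would take as the key theorem established elsewhere in the paper.

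To close the cycle I must establish $(3) \Rightarrow (5)$, which I would prove contrapositively: if $G \cong Q_8 \times B$ with $b^2 = \1$ for every $b \in B$, then \emph{no} connected Cayley graph of~$G$ is CCA. This strengthening of ``$G$ is not a CCA group'' should either be in the CCA literature referenced by the paper, or be obtainable by exhibiting, for each connected Cayley graph of $Q_8 \times B$, a colour-preserving automorphism that is manifestly not affine (the archetypal example comes from combining the identity on the $B$-factor with a carefully chosen bijection of $Q_8$ that negates the ``off-axis'' generators). I would cite the result rather than reprove it here.

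Finally, for the finite case I would prove $(5) \Leftrightarrow (6)$ by invoking the classification of finite groups that admit no normal Cayley graph: these are exactly $Q_8 \times B$ (with $B$ elementary abelian $2$-group) together with $\Z_4 \times \Z_2$. Both directions of $(5) \Leftrightarrow (6)$ are then immediate, after the trivial observation that $\Z_4 \times \Z_2$ is not itself of the form $Q_8 \times B$. The main obstacle in the whole scheme is $(5) \Rightarrow (2)$: it is the heart of the paper and the only step that requires genuine argument rather than a definitional unwinding or a pointer to the literature.
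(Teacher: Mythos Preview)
Your scheme is correct and closely matches the paper's. The one structural difference is at $(3)\Rightarrow(5)$: the paper factors this as $(3)\Rightarrow(4)\Rightarrow(5)$, using the monotonicity observation that if $\Cay(G;S)$ is CCA and $S\subseteq S'$ then $\Cay(G;S')$ is CCA (so it suffices to exhibit a non-affine colour-preserving automorphism only on the \emph{complete} graph of $Q_8\times B$). Your direct $(3)\Rightarrow(5)$ also works, but the uniform witness is simpler than the ``off-axis'' map you sketch: inversion $x\mapsto x^{-1}$ is colour-preserving on \emph{every} Cayley graph of $Q_8\times B$ (any two elements commute or invert each other, so $(gs)^{-1}=s^{-1}g^{-1}\in\{g^{-1}s^{\pm1}\}$), fixes~$\1$, and is not affine since the group is nonabelian---this is exactly the map the cited literature uses. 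For~$(6)$ the paper does what you propose, citing Wang--Wang--Xu (which in turn rests on the GRR classification) for $(5)\Rightarrow(6)$, and handling $(6)\Rightarrow(1)$ by noting that normal implies strongly CCA together with a direct check that $\Z_4\times\Z_2$ has a strongly CCA Cayley graph.
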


\goodbreak 

\begin{note} \label{equivNotes}
\leavevmode
    \begin{enumerate}
    \item \label{equivNotes-easy}
    The following implications of Theorem~\ref{equiv} are more-or-less obvious (see Lemma~\ref{easy}):
    \[
        \hbox{\pref{equiv-isStrong}} \Rightarrow
        \vbox{%
            \hbox{\pref{equiv-hasNormal}}
            \hbox to 1.3em{\hss\rotatebox{90}{$\Leftarrow$}\hss}
            \hbox{\pref{equiv-hasStrong}}%
            }
         \Rightarrow
        \hbox{\pref{equiv-hasCCA}} \Leftrightarrow
        \hbox{\pref{equiv-isCCA}} \Rightarrow
        \hbox{\pref{equiv-notHam}} \]
        
    \item \label{equivNotes-finiteCCA}
    The implication $(\ref{equiv-notHam} \Rightarrow \ref{equiv-isCCA})$ is also known (but it is not obvious). It was proved by D.\,P.\,Byrne, M.\,J.\,Donner, and T.\,Q.\,Sibley \cite[cf.\ p.~324]{ByrneDonnerSibley} for finite groups, and the general case was established by E.\,Dobson, A.\,Hujdurović, K.\,Kutnar, and J.\,Morris \cite[Thm.~2.3]{DHKM-CCAodd}. 

    \item \label{equivNotes-hasNormalPf}
    For groups that are  \emph{finite,} the implication $(\ref{equiv-notHam} \Rightarrow \ref{equiv-hasNormal})$ was established by C.\,Wang, D.\,Wang, and M.\,Xu \cite{Wang}. The proof relies on the classification of finite groups that do not have a Graphical Regular Representation (GRR), which is a difficult theorem that was proved in a series of papers by several authors (see \cite{Godsil-GRR}).
    \end{enumerate}
\end{note}

Therefore, the implication $(\ref{equiv-hasStrong} \Rightarrow \ref{equiv-isStrong})$ is the only new content of the theorem for finite groups, but $(\ref{equiv-hasCCA} \Rightarrow \ref{equiv-hasStrong})$ is new for infinite groups. Actually, we prove $(\ref{equiv-notHam} \Rightarrow \ref{equiv-isStrong})$ (see Corollary~\ref{NotStrongIff}), which contains both of these. 

We also prove the following result (see Section~\ref{PermSect}), which was conjectured by Byrne, Donner, and Sibley in \cite[cf.\ p.~332]{ByrneDonnerSibley}. It provides a direct proof of $(\ref{equiv-isCCA} \Rightarrow \ref{equiv-isStrong})$.

\begin{thm} \label{ColPermOfComplete}
If $G$ is any group, then every colour-permuting automorphism of the complete graph \/ $\Cay \bigl( G; G \smallsetminus \{\1\} \bigr)$ is the composition of a group automorphism and a colour-preserving graph automorphism.
\end{thm}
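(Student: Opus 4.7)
We may assume $\varphi(\1) = \1$, by pre-composing with the colour-preserving left translation $x \mapsto \varphi(\1)^{-1}x$. Let $\pi\colon G \to G$ denote the colour permutation associated to $\varphi$ (extended by $\pi(\1) = \1$); so $\pi(s^{-1}) = \pi(s)^{-1}$ for all $s$, and the colour-permuting condition $\varphi(gs) \in \{\varphi(g)\pi(s)^{\pm1}\}$ specialises at $g = \1$ to $\varphi(g) \in \{\pi(g), \pi(g)^{-1}\}$ for every $g \in G$. Write $\varphi(g) = \pi(g)^{\epsilon(g)}$ for some sign function $\epsilon \colon G \to \{\pm1\}$.

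The strategy is to prove the key assertion that $\pi$ is a group automorphism of~$G$. Once that is in hand, take $\alpha := \pi \in \Aut G$ and set $\psi := \alpha^{-1} \compose \varphi$. Since $\alpha^{-1}$ is a homomorphism with $\alpha^{-1}\compose\pi = \id$, a direct computation yields
\[
\psi(gs) = \alpha^{-1}\bigl(\varphi(g)\,\pi(s)^{\pm1}\bigr) = \psi(g)\,s^{\pm1},
\]
so $\psi$ is colour-preserving, and $\varphi = \alpha \compose \psi$ is the required decomposition.

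To prove the key assertion, apply the colour-permuting condition to the edge $g \edge gh$ for $g, h \in G$ with $g, h, gh$ all different from~$\1$. Its colour is $\{h^{\pm1}\}$, its image under $\varphi$ has colour $\{\pi(h)^{\pm1}\}$, and so
\[
\pi(gh)^{\epsilon(gh)} = \pi(g)^{\epsilon(g)}\,\pi(h)^{\delta(g,h)}
\]
for some $\delta(g,h) \in \{\pm1\}$. The plan is to exploit this family of identities --- together with $\pi(s^{-1}) = \pi(s)^{-1}$, the bijectivity of~$\pi$, and the consistency constraint coming from the associativity $(gh)k = g(hk)$ --- to conclude that the signs are forced to satisfy $\pi(gh) = \pi(g)\pi(h)$ for every $g, h$. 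Taking $g = h^n$ in the central identity already gives $\pi(h^n) = \pi(h)^{\pm n}$, with the sign constant on each cyclic subgroup, so $\pi$ restricted to any cyclic subgroup is a homomorphism.

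The main obstacle is the global sign bookkeeping. When $\pi(g)$ is an involution the exponent $\epsilon(g)$ is genuinely ambiguous, and a priori the signs $\epsilon$ and $\delta$ could conspire so that $\pi$ fails to be a homomorphism even though every triangle satisfies the colour-permuting identity. The heart of the proof will consist of ruling out such ``twisted'' patterns by playing different triangle identities against each other and against the bijectivity of $\pi$; we expect this to be the most delicate part of the argument, requiring a careful analysis based on the commutation and order structure of~$G$ (especially inside $2$-subgroups, where most involutions live).
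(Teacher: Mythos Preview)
Your framework matches the paper's: reduce to $\varphi(\1)=\1$, then find a group automorphism~$\alpha$ that induces the same permutation of colour classes as~$\varphi$, so that $\psi=\alpha^{-1}\compose\varphi$ is colour-preserving. The difficulty is exactly where you say it is, and that is also where your proposal stops being a proof. You announce the ``key assertion'' that $\pi$ is a group automorphism, but you never prove it; the final paragraph is a plan, not an argument. Two points deserve emphasis.

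First, $\pi$ is not a single well-defined map: the colour-permuting condition only determines $\pi$ up to replacing $\pi(s)$ by $\pi(s)^{-1}$ on each inverse pair $\{s,s^{-1}\}$. So the honest statement of the key assertion is that \emph{some} choice of these signs yields an automorphism. Your ``sign bookkeeping'' has to construct that choice, not merely constrain an already-fixed~$\pi$. In particular, your remark that ``$\pi$ restricted to any cyclic subgroup is a homomorphism'' is true only after an appropriate sign choice on that subgroup, and there is no reason a priori that the choices on different cyclic subgroups are compatible.

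Second, the paper does not attempt to resolve the signs by combinatorics on triangles. Instead it invokes Corollary~\ref{NotStrongIff}: if $G$ is not a hamiltonian $2$-group, then $K_G$ is strongly CCA, so $\varphi$ itself is already an automorphism and there is nothing to do. This reduction (which rests on the Byrne--Donner--Sibley / Leemann--de~la~Salle classification of colour-preserving automorphisms of $K_G$, Theorem~\ref{ClassifThm}) leaves only the case $G = Q_8\times B$ with $B$ elementary abelian. There the paper constructs the automorphism~$\beta$ explicitly from $\varphi(i)$, $\varphi(j)$, and $\varphi|_B$, and checks directly that $\beta(s)\in\{\varphi(s)^{\pm1}\}$ for every~$s$. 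That construction \emph{is} the missing sign choice, and it succeeds precisely because the ambient group is so restricted. Your proposal gives no indication of how to obtain a comparable structural reduction, and without one the ``delicate analysis'' you anticipate has no clear endpoint.
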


As an offshoot of our proof of Theorem~\ref{ColPermOfComplete}, we also establish the following result (see Corollary!\ref{NoA0=>strong}), which shows that placing a natural restriction on the colour-preserving automorphisms of a Cayley graph can also restrict the colour-permuting automorphisms:

\begin{prop}
If every colour-preserving automorphism of\/ $\Cay(G;S)$ is a translation, then every colour-permuting automorphism of\/ $\Cay(G;S)$ is an affine map \textup(i.e., $\Cay(G;S)$ is strongly CCA\textup).
\end{prop}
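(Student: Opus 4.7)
The plan is a short conjugation argument: I will use the observation that conjugation by a colour-permuting map preserves the property of being colour-preserving. Under the hypothesis of the proposition, this forces any colour-permuting $\varphi$ to normalize the group of left translations $\{L_g : g \in G\}$, and evaluating at~$\1$ then exhibits $\varphi$ (up to a translation) as a group automorphism.

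More concretely, given a colour-permuting automorphism $\varphi$ with induced colour permutation~$\pi$ on the pairs $\{s,s^{-1}\}$, I would first replace $\varphi$ by $L_{\varphi(\1)^{-1}} \compose \varphi$; since left translations are colour-preserving, this replacement does not affect either the hypothesis on $\varphi$ or the goal of being affine, and it achieves $\varphi(\1) = \1$. Then, for each $g \in G$, I would analyse $\varphi \compose L_g \compose \varphi^{-1}$ by tracking its effect on an edge of colour $\{s,s^{-1}\}$: $\varphi^{-1}$ twists the colour by $\pi^{-1}$, $L_g$ preserves the new colour, and $\varphi$ twists back by $\pi$, so the composition is colour-preserving. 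By hypothesis it must therefore be a translation $L_{\alpha(g)}$, and the identity $L_g \compose L_{g'} = L_{gg'}$ makes $\alpha \colon G \to G$ a group automorphism (well-definedness, multiplicativity, and bijectivity are all immediate from the conjugation formula and the fact that the same argument applied to $\varphi^{-1}$ gives an inverse). Finally, evaluating $L_{\alpha(g)} = \varphi \compose L_g \compose \varphi^{-1}$ at~$\1$, and using $\varphi^{-1}(\1) = \1$, yields $\alpha(g) = \varphi(g)$, so $\varphi = \alpha \in \Aut G$, and in particular $\varphi$ is affine.

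There is no genuine obstacle here; the argument is purely structural. The only step that needs any care is the verification that conjugating a colour-preserving automorphism by a colour-permuting one again yields a colour-preserving automorphism, and this is a one-line check because the colour permutations of $\varphi$ and $\varphi^{-1}$ are inverse to each other and therefore cancel.
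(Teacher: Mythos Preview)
Your argument is correct. The observation that $\varphi \compose L_g \compose \varphi^{-1}$ is colour-preserving (because the colour permutations of $\varphi$ and $\varphi^{-1}$ cancel) is exactly right, and from there the conclusion that $\varphi$ is a group automorphism is a clean instance of the classical fact that the normalizer of a regular subgroup in a permutation group splits as the regular subgroup times the stabilizer of a point.

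The paper takes a genuinely different route. Rather than conjugating a single translation, it builds (Lemma~\ref{MakeColPres}) the element
\[
\psi = L_{g}^{-1}\,\varphi^{-1}\,L_{\varphi(g)}\,L_{\varphi(gs)}^{-1}\,\varphi\,L_{gs}
\in \caut^0_\1(X)
\]
and notes that $\psi$ fixes~$s$. This feeds into the more general Lemma~\ref{StrongIfNoStab}: whenever the set $S^*$ of elements of~$S$ with trivial stabilizer in $\caut^0_\1(X)$ generates~$G$, the graph is strongly CCA. The proposition you are proving is then the extreme special case where $\caut^0_\1(X)$ is trivial, so $S^* = S$. Thus the paper's detour buys the intermediate results (Lemma~\ref{StrongIfNoStab} and Corollary~\ref{SemiregularIsStrong}), which are needed later for Corollary~\ref{NotStrongIff}; your normalizer argument, by contrast, is shorter and more conceptual for this specific statement but does not immediately yield those refinements, since when $\caut^0_\1(X)$ is nontrivial the conjugate $\varphi \compose L_g \compose \varphi^{-1}$ need not be a translation.
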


\begin{ack}
The authors thank Andrew Fiori for asking the question that inspired this research: which groups have a Cayley graph that is strongly CCA?
The first author also thanks Joy Morris for financial support provided by a grant from the Natural Science and Engineering Research Council of Canada (RGPIN-2017-04905).
\end{ack}

\section{Preliminaries }

Graphs in this paper are simple and undirected, but they may be infinite. 

\begin{notation}
\leavevmode
    \begin{enumerate}
    \item $G$ always denotes a group (which may be infinite).
    \item $\1$ is the identity element of~$G$.
    \item $S$ (and $S'$) always denotes an inverse-closed subset of~$G$.
    \end{enumerate}
\end{notation}

\begin{remark} \label{LoopsOrNot}
In the statements of our results, it does not matter whether graphs are allowed to have loops. This is because all of our results concern automorphisms of Cayley graphs, and adding a loop to every vertex of a graph does not affect its automorphism group.
\end{remark}

\subsection{Colour-permuting automorphisms}

The following result collects several elementary observations about colour-permuting automorphisms. All are trivial and/or well known.

\begin{lem} \label{elem}
\leavevmode
\begin{enumerate}

\item A permutation~$\varphi$ of~$G$ is a colour-preserving automorphism of\/ $\Cay(G;S)$ if and only if, for all $g \in G$ and $s \in S$, we have $\varphi(gs) \in \{\varphi(g) \, s^{\pm1} \}$. \cite[p.~190]{HKMM-ColPres}

\item For each $g \in G$, the left translation $L_g \colon x \mapsto gx$ is a colour-preserving automorphism of $\Cay(G;S)$.  \cite[p.~190]{HKMM-ColPres}

\item If $\alpha$ is an automorphism of the group~$G$, such that $\alpha(s) \in \{s^{\pm1}\}$ for all $s \in S$, then $\alpha$~is a colour-preserving automorphism of\/ $\Cay(G;S)$.  \cite[p.~190]{HKMM-ColPres}

\item \label{elem-strongCCA}
    Every strongly CCA Cayley graph is CCA. So every strongly CCA group is CCA. \cite[p.~191]{HKMM-ColPres}

\item \label{elem-normalstrongly}
    Every normal Cayley graph is strongly CCA. 
    \textup{(cf.~\cite[Remark~6.2]{HKMM-ColPres})}

\item \label{elem-minimalgenerating}
    Assume $S \subseteq S' \subseteq G$. If\/ $\Cay(G; S)$ is CCA, then\/ $\Cay(G; S')$ is CCA.
    \textup{(cf.~\cite[Lemma.~3.1]{fuller})}
\end{enumerate}
\end{lem}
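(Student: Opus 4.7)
The plan is to verify each of the six claims by unpacking the relevant definitions; since all are advertised as trivial or well known, none should require more than a short direct argument.

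For~(1), I would observe that the edge $g \edge gs$ carries colour $\{s, s^{-1}\}$, so a colour-preserving $\varphi$ must send it to an edge incident to $\varphi(g)$ of the same colour. The only such edges are $\varphi(g) \edge \varphi(g)\, s$ and $\varphi(g) \edge \varphi(g)\, s^{-1}$, which forces $\varphi(gs) \in \{\varphi(g)\, s^{\pm1}\}$. Conversely, the displayed condition says each edge is mapped to an edge of the same colour, and since $\varphi$ is a bijection this makes it a colour-preserving automorphism.

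For~(2), the computation $L_g(xs) = gxs = L_g(x)\cdot s$ satisfies the criterion of~(1). For~(3), $\alpha(xs) = \alpha(x)\,\alpha(s)$, and the hypothesis $\alpha(s) \in \{s^{\pm1}\}$ puts this in the form required by~(1). Part~(4) follows because every colour-preserving automorphism is colour-permuting (taking $\pi = \id$), so the strongly CCA assumption yields that all colour-preserving automorphisms are affine; the group-level statement is then immediate from the definitions. Part~(5) is equally immediate: if \emph{every} graph automorphism is affine, the same holds for the subclass of colour-permuting ones.

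For~(6), let $\varphi$ be a colour-preserving automorphism of $\Cay(G;S')$. By~(1) the condition $\varphi(gs) \in \{\varphi(g)\, s^{\pm1}\}$ holds for every $s \in S'$, hence in particular for every $s \in S \subseteq S'$. Applying the converse direction of~(1), $\varphi$ is also a colour-preserving automorphism of $\Cay(G;S)$, so the CCA hypothesis on the latter forces $\varphi$ to be affine. I do not anticipate any genuine obstacle here; the only spot that could trip up a careless reader is~(6), where one must notice that the edge condition in~(1) survives restriction from the larger connection set to the smaller one, so that a colour-preserving automorphism of the denser Cayley graph is automatically one of the sparser graph.
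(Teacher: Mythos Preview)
Your arguments are correct. The paper itself does not supply a proof of this lemma at all; it simply records each item with a citation to the literature (\cite{HKMM-ColPres}, \cite{fuller}) and moves on. What you have written is essentially the standard verification that those references contain, so there is nothing to compare against beyond noting that you are filling in what the paper deliberately outsources. The one place a reader might want an extra word is the converse direction of~(1): you assert that the displayed condition plus bijectivity of~$\varphi$ yields a colour-preserving \emph{automorphism}, but strictly speaking one should remark that the same condition applied to~$\varphi^{-1}$ (or a counting/injectivity argument on each colour class) guarantees the inverse is also edge-preserving. This is routine and does not affect the validity of your outline.
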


We now prove (or give references for) the easy parts of Theorem~\ref{equiv} that are listed in Note~\fullref{equivNotes}{easy}.

\begin{lem} \label{easy}
\leavevmode
\begin{enumerate} \renewcommand{\theenumi}{\alph{enumi}}

\item \label{easy-hasstrong}
$(\ref{equiv-hasNormal} \Rightarrow \ref{equiv-hasStrong})$
If either $G$ has a Cayley graph that is normal, or $G \iso \Z_4 \times \Z_2$, then $G$ has a Cayley graph that is strongly CCA.
\textup{(see~\fullref{elem}{normalstrongly} for the first part)}

\item \label{easy-isstronghas}
$(\ref{equiv-isStrong} \Rightarrow \ref{equiv-hasStrong})$
If the complete graph $\Cay \bigl( G; G \smallsetminus \{\1\} \bigr)$ is strongly CCA, then $G$ has a Cayley graph that is strongly CCA.

\item $(\ref{equiv-hasStrong} \Rightarrow \ref{equiv-hasCCA})$
If $G$ has a Cayley graph that is strongly CCA, then $G$ has a Cayley graph that is CCA.
\textup{(see \fullref{elem}{strongCCA})}

\item \label{easy-hasCCAiffisCCA}
$(\ref{equiv-hasCCA} \Leftrightarrow \ref{equiv-isCCA})$
$G$ has a Cayley graph that is CCA if and only if the complete graph $\Cay \bigl( G; G \smallsetminus \{\1\} \bigr)$ is CCA.

\item $(\ref{equiv-isCCA} \Rightarrow \ref{equiv-notHam})$
If $G = Q_8 \times B$, where $b^2 = \1$ for every $b \in B$, then the complete graph\/ $\Cay \bigl( G; G \smallsetminus \{\1\} \bigr)$ is \underline{not} CCA.
\textup{\cite[1st~paragraph of proof of Thm.~2.3]{DHKM-CCAodd} (or see Example~\fullref{CompleteEgs}{Q8})}

\end{enumerate}
\end{lem}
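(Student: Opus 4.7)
The plan is to dispatch each of the five implications by a short appeal to Lemma~\ref{elem} or to a definition unpacking; only part~\pref{easy-hasstrong} requires any genuinely new content.

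Parts~\pref{easy-isstronghas}, (c), and (e) are essentially one-liners. For~\pref{easy-isstronghas}, the complete graph on~$G$ is itself a Cayley graph of~$G$, so it witnesses both properties. For~(c), the conclusion is exactly \fullref{elem}{strongCCA} applied to the hypothesised Cayley graph. For~(e), I would cite the opening paragraph of the proof of \cite[Thm.~2.3]{DHKM-CCAodd}, or forward-reference Example~\fullref{CompleteEgs}{Q8}, which produces an explicit non-affine colour-preserving automorphism of the complete Cayley graph of $Q_8 \times B$ when every $b \in B$ satisfies $b^2 = \1$.

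For part~\pref{easy-hasCCAiffisCCA}, the direction $(\ref{equiv-isCCA} \Rightarrow \ref{equiv-hasCCA})$ is immediate from the observation that the complete graph is a Cayley graph of~$G$. For the converse, given any CCA Cayley graph $\Cay(G;S)$, I would apply \fullref{elem}{minimalgenerating} with $S' = G \smallsetminus \{\1\}$ to enlarge $S$ all the way to the complete connection set while inheriting the CCA property.

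The main obstacle lies in part~\pref{easy-hasstrong}. If $G$ admits a normal Cayley graph, then \fullref{elem}{normalstrongly} (normal implies strongly CCA) supplies the required strongly CCA Cayley graph directly. The residual case $G \iso \Z_4 \times \Z_2$, where Wang's theorem guarantees that \emph{no} normal Cayley graph exists, must be treated by exhibiting an explicit connection set~$S$ and verifying by hand that every colour-permuting automorphism of $\Cay(\Z_4 \times \Z_2;\, S)$ is affine. Since the group has only eight elements, and being affine for a colour-permuting automorphism can be checked on a representative of the vertex stabiliser of~$\1$ (using that left translations are already affine), this reduces to a small, finite case analysis on a single well-chosen $S$.
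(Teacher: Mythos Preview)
Your proposal is correct and follows essentially the same route as the paper: the paper likewise dispatches (b), (c), and (e) as trivial or by reference, handles the nontrivial direction of (d) via \fullref{elem}{minimalgenerating}, and treats the residual $\Z_4 \times \Z_2$ case in (a) by exhibiting a specific generating set, namely $S = \{(\pm1,0),(0,1)\}$, and asserting that the resulting Cayley graph is strongly CCA. The only cosmetic difference is that the paper enlarges to $S' = G$ first (with loops) and then invokes Remark~\ref{LoopsOrNot} to delete them, rather than going directly to $S' = G \smallsetminus \{\1\}$ as you do.
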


\begin{proof}
References have been provided for all but \pref{easy-isstronghas} and \pref{easy-hasCCAiffisCCA}, and the fact (in~\pref{easy-hasstrong}) that $\Z_4 \times \Z_2$ has a Cayley graph that is strongly CCA. 
The statement in~\pref{easy-isstronghas} is completely trivial, so it requires no further comment.

To establish the nontrivial direction~$(\Rightarrow)$ of~\pref{easy-hasCCAiffisCCA}, note that if $\Cay(G;S)$ is CCA, then we see from Lemma~\fullref{elem}{minimalgenerating} that $\Cay(G; G)$ is CCA, because $S \subseteq G$. Then the graph obtained by removing the loop at every vertex is also CCA (see Remark~\ref{LoopsOrNot}), so $\Cay \bigl( G; G \smallsetminus \{\1\} \bigr)$ is CCA.

It is easy to see that $\Cay(\Z_4 \times \Z_2; S)$ is strongly CCA for $S = \{(\pm1, 0), (0,1)\}$.
\end{proof}

For completeness, we now establish a few additional well-known, elementary facts. 

\begin{lem} \label{phi(s^k)}
Assume $\varphi$ is a colour-permuting automorphism of $\Cay(G; S)$, $g \in G$, and $s,t \in S$. If $\varphi(gs) = \varphi(g) \, t$, then $\varphi(gs^n) = \varphi(g) \, t^n$ for all $n \in \Z$.
\end{lem}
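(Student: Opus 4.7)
\medskip

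\noindent\textbf{Proof proposal.} Since $\varphi$ is colour-permuting, there is a permutation $\pi$ of~$S$ with $\varphi(hs') \in \{\varphi(h)\,\pi(s')^{\pm 1}\}$ for every $h \in G$ and $s' \in S$, and $\pi(s'^{-1}) = \pi(s')^{-1}$. Applying this at $h=g$, $s' = s$ and comparing with the hypothesis $\varphi(gs) = \varphi(g)\,t$, we see $t \in \{\pi(s), \pi(s)^{-1}\}$, so $\{t, t^{-1}\} = \{\pi(s), \pi(s)^{-1}\}$. Consequently the edges of colour $\{s, s^{-1}\}$ are sent by $\varphi$ to edges of colour $\{t, t^{-1}\}$, which is the fact that will drive the induction.

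The plan is to prove the claim by induction on~$|n|$, handling $n \ge 0$ and $n \le 0$ symmetrically. The base cases $n=0, 1$ are immediate. For the inductive step with $n \ge 1$, assume $\varphi(gs^{n-1}) = \varphi(g)\,t^{n-1}$ and $\varphi(gs^n) = \varphi(g)\,t^n$. Since $gs^{n+1} = (gs^n)\, s$, the colour-permuting property applied at the vertex $gs^n$ gives
\[
\varphi(gs^{n+1}) \in \{\varphi(gs^n)\, \pi(s)^{\pm 1}\} = \{\varphi(g)\,t^{n+1},\ \varphi(g)\,t^{n-1}\}.
\]
The first option is the desired conclusion, so we only need to rule out (or absorb) the second.

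The one subtle point---and the only real obstacle---is the ``doubling back'' case $\varphi(gs^{n+1}) = \varphi(g)\,t^{n-1}$. In that case, the inductive hypothesis gives $\varphi(gs^{n+1}) = \varphi(gs^{n-1})$, so injectivity of~$\varphi$ forces $s^2 = \1$. But then $s = s^{-1}$, so $\pi(s) = \pi(s^{-1}) = \pi(s)^{-1}$, whence $t^2 = \1$ as well. In this degenerate situation $t^{n+1} = t^{n-1}$, so the two possibilities coincide and the desired equality still holds. The inductive step for $n \le -1$ is entirely analogous: apply the colour-permuting property at the vertex $gs^n$ using the edge to $gs^{n-1} = (gs^n)\, s^{-1}$, and invoke the same doubling-back observation. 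This completes the induction and establishes the lemma for all $n \in \Z$.
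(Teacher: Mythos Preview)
Your proof is correct, but it proceeds by a different route than the paper's. The paper observes that $\varphi$ restricts to an isomorphism from the connected component of $\Cay(G;\{s^{\pm1}\})$ containing~$g$ to the connected component of $\Cay(G;\{t^{\pm1}\})$ containing~$\varphi(g)$; since these components are cycles (or, in the infinite case, bi-infinite paths), an isomorphism between them is determined by the image of a single edge, and the conclusion follows immediately.

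Your argument replaces this geometric picture with an explicit two-step induction on~$|n|$, together with the ``doubling-back'' injectivity trick to rule out the wrong sign choice. The trade-off is clear: the paper's proof is shorter and more conceptual, but implicitly leans on the (easy but unstated) classification of automorphisms of cycles and paths, including the degenerate one- and two-vertex cases; your proof is longer but entirely self-contained, handling the degenerate $s^2=\1$ case uniformly without appealing to any structural facts about cycles. One small phrasing point: your first negative step (obtaining $n=-1$ from $n=0,1$) is actually carried out at the vertex $gs^0=g$, not at some $gs^n$ with $n\le -1$, so the sentence ``The inductive step for $n\le -1$'' slightly understates the range over which the described step is applied. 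The argument itself is fine.
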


\begin{proof}
Since $\varphi$ is colour-permuting, we see that it is an isomorphism from $\Cay \bigl( G; \{s^{\pm1}\} \bigr)$ to $\Cay \bigl( G; \{t^{\pm1}\} \bigr)$. So it must restrict to an isomorphism from the connected component of $\Cay \bigl( G; \{s^{\pm1}\} \bigr)$ that contains~$g$ to the connected component of $\Cay \bigl( G; \{t^{\pm1}\} \bigr)$ that contains~$\varphi(g)$. Since these components are cycles (except in the trivial cases where they have only one or two vertices), the restriction of~$\varphi$ to this component is determined by what it does to a single edge.
\end{proof}

\begin{lem} \label{phi(gh)}
Assume $\varphi$ is a colour-permuting automorphism of\/ $\Cay(G;S)$, such that $\varphi(\1) = \1$. Then, for all $g \in G$, $s \in S$, and $n \in \Z$, we have
    \[ \varphi(g s^n) \in \{ \varphi(g) \, \varphi(s)^{\pm n} \} . \]
\end{lem}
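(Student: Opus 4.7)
The plan is to combine the colour-permuting condition (which, at $g=\1$, pins down $\varphi(s)$ in terms of the permutation $\pi$ of $S$) with the previous Lemma~\ref{phi(s^k)}, which promotes knowledge of $\varphi$ on a single edge of a monochromatic cycle to knowledge on the whole cycle.

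First I would unpack the colour-permuting condition in the normalized form stated at item~(6) of the opening definition: there is a permutation $\pi$ of $S$ with $\pi(s^{-1})=\pi(s)^{-1}$ such that, for all $h\in G$ and $s\in S$,
\[ \varphi(h\,s)\in\{\varphi(h)\,\pi(s)^{\pm 1}\}. \]
Setting $h=\1$ and using $\varphi(\1)=\1$ immediately gives $\varphi(s)\in\{\pi(s)^{\pm 1}\}$, i.e.\ $\pi(s)\in\{\varphi(s)^{\pm 1}\}$, and hence $\{\pi(s)^{\pm 1}\}=\{\varphi(s)^{\pm 1}\}$ as sets.

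Next, fix $g\in G$ and $s\in S$. Applying the colour-permuting condition at $h=g$, there is some $t\in\{\pi(s)^{\pm 1}\}=\{\varphi(s)^{\pm 1}\}$ with $\varphi(gs)=\varphi(g)\,t$. Since $S$ is inverse-closed and $\pi$ permutes $S$, we have $t\in S$, so Lemma~\ref{phi(s^k)} applies and yields $\varphi(g s^n)=\varphi(g)\,t^n$ for all $n\in\Z$. Because $t=\varphi(s)^{\varepsilon}$ for some $\varepsilon\in\{\pm 1\}$, this gives
\[ \varphi(g s^n)=\varphi(g)\,\varphi(s)^{\varepsilon n}\in\{\varphi(g)\,\varphi(s)^{\pm n}\}, \]
as required.

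There is no real obstacle here: the only thing one needs to be careful about is not to conflate the two separate choices of sign in $\{\varphi(s)^{\pm n}\}$ and $\{\pi(s)^{\pm 1}\}$. The key conceptual move is recognising that once $\varphi(\1)=\1$ is imposed, the abstract ``permutation of colours'' $\pi(s)$ is forced (up to inversion) to coincide with the concrete value $\varphi(s)$, which is exactly what converts the colour-permuting statement at $g$ into the desired formula.
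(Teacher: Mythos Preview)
Your proof is correct and follows essentially the same approach as the paper's: identify $\pi(s)$ (the paper's~$t$) with $\varphi(s)$ up to inversion by evaluating at $g=\1$, and then invoke Lemma~\ref{phi(s^k)} to pass from $n=1$ to arbitrary~$n$. The only difference is cosmetic---you name the permutation~$\pi$ explicitly, whereas the paper just fixes a single~$t$ depending on~$s$.
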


\begin{proof}
Fix $s \in S$. Since $\varphi$ is colour-permuting, there is some $t \in S$, such that
    \[ \text{$\varphi(gs) \in \{\varphi(g) \, t^{\pm1} \}$
    \quad for all $g \in G$.} \]
By taking $g = \1$, we see that $\varphi(s) \in \{ t^{\pm1} \}$. Therefore $\varphi(gs) \in \{\varphi(g) \, \varphi(s)^{\pm1} \}$ for all~$g$. The desired conclusion now follows from Lemma~\ref{phi(s^k)}.
\end{proof}

\begin{lem} \label{order2}
Assume $\varphi$ is a colour-permuting automorphism of\/ $\Cay(G;S)$, such that $\varphi(\1) = \1$, and let
    $H =\langle\, s \in S \mid s^2 = \1 \,\rangle$. 
Then $\varphi(g h) = \varphi(g) \, \varphi(h)$ for all $g \in G$ and $h \in H$. 
\end{lem}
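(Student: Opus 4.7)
The plan is to induct on the length of $h$ as a product of the involution generators of $H$, and to reduce everything to a single cleanly stated base case.

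For the base case, I would show that whenever $s \in S$ satisfies $s^2 = \1$, we have $\varphi(gs) = \varphi(g)\,\varphi(s)$ for every $g \in G$, with no sign ambiguity. Lemma~\ref{phi(gh)} applied at $g = \1$ and $n = 2$ gives $\1 = \varphi(s^2) \in \{\varphi(s)^{\pm 2}\}$, so $\varphi(s)^2 = \1$. (Equivalently, the colour-permuting property forces $\pi(s) = \pi(s^{-1}) = \pi(s)^{-1}$, and $\varphi(s) \in \{\pi(s)^{\pm 1}\}$.) Consequently $\varphi(s) = \varphi(s)^{-1}$, and the set $\{\varphi(g)\,\varphi(s)^{\pm 1}\}$ supplied by Lemma~\ref{phi(gh)} at $n = 1$ collapses to the singleton $\{\varphi(g)\,\varphi(s)\}$. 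This establishes the base case.

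For the inductive step, write $h = h' s$, where $h'$ is a strictly shorter product of involution generators of~$H$ and $s \in S$ with $s^2 = \1$. Applying the base case to the pair $(gh', s)$ gives $\varphi(gh) = \varphi(gh' \cdot s) = \varphi(gh')\,\varphi(s)$; the induction hypothesis gives $\varphi(gh') = \varphi(g)\,\varphi(h')$; and the base case applied once more to $(h', s)$ gives $\varphi(h')\,\varphi(s) = \varphi(h' \cdot s) = \varphi(h)$. Chaining these three equalities yields $\varphi(gh) = \varphi(g)\,\varphi(h)$, completing the induction.

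The entire argument really hinges on the base case, and within the base case on the single observation that $\varphi$ sends involutions to involutions. Once that sign ambiguity from Lemma~\ref{phi(gh)} has been eliminated, everything else is bookkeeping, so I do not anticipate any serious obstacle.
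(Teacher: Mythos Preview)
Your proof is correct and follows essentially the same route as the paper: both arguments first kill the sign ambiguity in Lemma~\ref{phi(gh)} by showing that $\varphi(s)$ is an involution whenever $s$ is, and then peel off the generators of~$h$ one at a time. The only cosmetic difference is that the paper argues $\varphi(S_2)=S_2$ from the graph-theoretic characterization of involutions (their colour class is a singleton), whereas you obtain $\varphi(s)^2=\1$ directly from Lemma~\ref{phi(gh)} with $n=2$; and the paper unrolls the product with a ``$\vdots$'' and then substitutes $g=\1$, whereas you package the same computation as a formal induction.
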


\begin{proof}
Let $S_2$ be the set of all elements of order~$2$ in~$S$. Let $h \in H$, so $h = s_1\dots  s_n $ for some $ s_1, s_2, \ldots, s_n \in S_2$. 

Note that the elements of~$S_2$ are precisely the elements of~$S$ that corresponding to undirected edges in $\Cay(G;S)$. Since $\varphi$ is an automorphism (and $\varphi(\1) = \1$), this implies $\varphi(S_2) = S_2$, so $\varphi(s)^{-1} = \varphi(s)$ for all $s \in S_2$. Hence, we see from Lemma~\ref{phi(gh)} (with $n = 1$) that $\varphi(g s_i) = \varphi(g) \,  \varphi(s_i)$ for all~$i$.

So
     \begin{align*}
        \varphi(g h)
        &= \varphi(g s_1 s_2 \dots s_n) \\
        &= \varphi(g s_1 s_2 \dots s_{n-1}) \, \varphi (s_n)\\
        &=\varphi(gs_1 s_2 \dots s_{n-2}) \, \varphi (s_{n-1}) \, \varphi (s_n)\\
        &\hskip 1cm \vdots  \\
        &=\varphi(g) \, \varphi (s_1) \, \varphi (s_2) \cdots  \varphi (s_n)
        . \end{align*}
By taking $g = \1$, we see that 
    \[ \varphi(h) = \varphi (s_1) \, \varphi (s_2) \cdots \varphi (s_n) . \]
So $\varphi(g h) = \varphi(g) \, \varphi(h)$, as desired.
\end{proof}

\begin{lem} \label{fixlem}
Assume $\varphi$ is a colour-preserving automorphism of\/ $\Cay(G;S)$, and let
    $H =\langle\, s \in S \mid s^2 = \1 \,\rangle$. 
Then:
    \begin{enumerate}
        \item \label{fixlem-all}
        $\varphi(xh) = \varphi(x)h$,  for all $x \in G$ and $h \in H$.
    \item \label{fixlem-1}
    In particular, if $\varphi(\1) = \1$, then $\varphi(h)=h$ for all $h \in H$.
    \end{enumerate}
\end{lem}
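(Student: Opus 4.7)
The plan is to exploit the key feature that distinguishes colour-preserving from merely colour-permuting: for a colour-preserving automorphism, the defining condition $\varphi(gs) \in \{\varphi(g)\, s^{\pm 1}\}$ (from Lemma~\fullref{elem}{} part~1) has no choice of sign when $s^2 = \1$, since then $s = s^{-1}$. So for every involution $s \in S$ and every $x \in G$, we simply have $\varphi(xs) = \varphi(x)\,s$.

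For part~\pref{fixlem-all}, given $h \in H$, write $h = s_1 s_2 \cdots s_n$ with each $s_i \in S$ satisfying $s_i^2 = \1$. Then I would apply the preceding observation $n$ times, peeling off one generator at a time from the right:
\begin{align*}
\varphi(xh)
&= \varphi(x s_1 s_2 \cdots s_{n-1}) \, s_n \\
&= \varphi(x s_1 s_2 \cdots s_{n-2}) \, s_{n-1} s_n \\
&\;\;\vdots \\
&= \varphi(x) \, s_1 s_2 \cdots s_n = \varphi(x) \, h.
\end{align*}

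Part~\pref{fixlem-1} is then immediate: substitute $x = \1$ into part~\pref{fixlem-all} and use the hypothesis $\varphi(\1) = \1$ to get $\varphi(h) = \varphi(\1)\, h = h$.

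There is no real obstacle here; the argument is essentially the colour-preserving analogue of Lemma~\ref{order2} but is strictly simpler, because the stronger hypothesis eliminates the need to track how $\varphi$ acts on individual generators (they are simply fixed pointwise from the right). In particular, there is no need to invoke Lemma~\ref{phi(s^k)} or Lemma~\ref{phi(gh)}.
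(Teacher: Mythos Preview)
Your proposal is correct and follows essentially the same approach as the paper's proof: both observe that $s^2=\1$ forces $\varphi(gs)=\varphi(g)\,s$, then peel off the generators $s_1,\ldots,s_n$ of~$h$ one at a time from the right to obtain $\varphi(xh)=\varphi(x)\,h$, with part~\pref{fixlem-1} following by setting $x=\1$. The paper also remarks (as you do) that this is the simpler colour-preserving analogue of Lemma~\ref{order2}.
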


\begin{proof}
This is very similar to the proof of Lemma~\ref{order2}.

Let $S_2$ be the set of all elements of order~$2$ in~$S$. Let $h \in H$, so $h = s_1\dots  s_n $ for some $ s_1, s_2, \ldots, s_n \in S_2$. For all $g \in G$, we have
    \[ \varphi(g s_i) 
    \in \{\varphi(g) \, s_i^{\pm 1}\}
    = \{\varphi(g) \, s_i\}
    , \]
so
    \begin{align*}
        \varphi(xh)
        &= \varphi(xs_1 s_2 \dots s_n) \\
        &= \varphi(xs_1 s_2 \dots s_{n-1})s_n \\
        &=\varphi(xs_1 s_2 \dots s_{n-2})s_{n-1}s_n\\
        &\hskip 1cm \vdots \\
        &=\varphi(x)s_1 s_2 \dots s_n\\
        &=\varphi(x)h
    . \qedhere \end{align*}
\end{proof}

\subsection{A few facts from group theory}

\begin{defn}[cf.\ {\cite[Definition 2.6]{HKMM-ColPres}}]
A group~$G$ is of \emph{dicyclic type} if it has an abelian subgroup~$A$ of index~$2$, an element~$z$ of order~$2$ that is in~$A$, and an element $q \notin A$, such that $q^2 = z$, and every element of~$A$ is inverted by~$q$ (in the action by conjugation). We write $G = \Dic(A, z, q)$.
\end{defn}

\begin{defn}[\cite{wiki-hamgrp}] \label{HamGrp}
A group~$G$ is \emph{hamiltonian} if it is nonabelian, and every subgroup of~$G$ is normal. It can be shown that $G$ is hamiltonian if and only if it is isomorphic to $Q_8 \times A \times B$, for some abelian groups $A$ and~$B$, such that every element of~$A$ has odd order, and every element of~$B$ has order $1$ or~$2$.
\end{defn}

\begin{lem}\label{propersub}
    If $H$ is a proper subgroup of a group $G$, then $\langle G \smallsetminus H \rangle = G$, where $G \smallsetminus H = \{\, g \in G \mid g \notin H \,\}$.
\end{lem}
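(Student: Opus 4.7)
The plan is to pick an element $g \in G \smallsetminus H$ (which exists because $H$ is a proper subgroup) and then use left-multiplication by $g$ to move elements of $H$ outside of $H$, so that they become accessible as products of elements of $G \smallsetminus H$.

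More concretely, let $K = \langle G \smallsetminus H \rangle$. Clearly $G \smallsetminus H \subseteq K$, so it suffices to show $H \subseteq K$. Fix once and for all some $g \in G \smallsetminus H$; then $g, g^{-1} \in K$. Given any $h \in H$, the key observation is that $gh \notin H$: indeed, if $gh$ were in $H$, then $g = (gh) h^{-1}$ would be a product of two elements of $H$, hence in $H$, contradicting the choice of~$g$. Therefore $gh \in G \smallsetminus H \subseteq K$, and consequently $h = g^{-1}(gh) \in K$. This shows $H \subseteq K$, and combined with $G \smallsetminus H \subseteq K$ we conclude $K = G$.

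There is no real obstacle here: the only thing one needs is the existence of a single element outside~$H$, which is precisely the hypothesis that $H$ is proper. The argument works verbatim for infinite groups, which matters because the paper allows $G$ to be infinite.
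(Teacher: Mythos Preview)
Your proof is correct and is essentially the same as the paper's: both fix some $g \in G \smallsetminus H$ and write each $h \in H$ as a product of two elements of $G \smallsetminus H$ (the paper uses $h = g \cdot (g^{-1}h)$, you use $h = g^{-1} \cdot (gh)$). The reasoning and level of detail match.
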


\begin{proof}
Since $G \smallsetminus H$ is obviously contained in $\langle G \smallsetminus H \rangle$, it suffices to show that $H \subseteq \langle G \smallsetminus H \rangle$. Since $H$ is proper, there is some $g \in G \smallsetminus H$. Then, for any $h \in H$, we have $g^{-1} h \in G \smallsetminus H$, so
    \[ h = g \, (g^{-1}h) 
    \in (G \smallsetminus H) (G \smallsetminus H)
    \subseteq \langle G \smallsetminus H \rangle. \qedhere \]
\end{proof}

\begin{lem}[internal direct product {\cite[Theorem~9, p.~171]{DummitFoote}}] \label{InternalDirProd}
If $H$ and~$K$ are normal subgroups of~$G$, such that $HK = G$ and $H \cap K = \{\1\}$, then $G$ is isomorphic to the direct product of~$H$ and~$K$, and we write $G = H \times K$.
\end{lem}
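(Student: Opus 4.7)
The plan is to exhibit an explicit isomorphism $\varphi \colon H \times K \to G$ given by $\varphi(h,k) = hk$, and to verify that it is a bijective homomorphism. All three properties (homomorphism, injectivity, surjectivity) reduce to short group-theoretic arguments, with the homomorphism property being the one that actually uses normality.

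First I would establish the key commutativity statement: every element of~$H$ commutes with every element of~$K$. For $h \in H$ and $k \in K$, consider the commutator $[h,k] = h^{-1} k^{-1} h k$. Since $K$ is normal in~$G$, the element $h^{-1} k^{-1} h$ lies in~$K$, so $[h,k] \in K$. Since $H$ is normal in~$G$, the element $k^{-1} h k$ lies in~$H$, so $[h,k] = h^{-1} \cdot (k^{-1} h k) \in H$. Therefore $[h,k] \in H \cap K = \{\1\}$, which gives $hk = kh$. This is the main (and only) step that uses normality, and I expect it to be the conceptual heart of the argument.

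Next I would check that $\varphi$ is a homomorphism. For $(h_1, k_1), (h_2, k_2) \in H \times K$, the commutativity just established yields
    \[ \varphi(h_1, k_1) \, \varphi(h_2, k_2)
    = h_1 k_1 h_2 k_2
    = h_1 h_2 k_1 k_2
    = \varphi \bigl( (h_1, k_1)(h_2, k_2) \bigr) . \]
Surjectivity is immediate from the hypothesis $HK = G$. For injectivity, suppose $\varphi(h,k) = \1$, so $hk = \1$; then $h = k^{-1}$ lies in $H \cap K = \{\1\}$, giving $h = k = \1$. Hence $\varphi$ is an isomorphism, and we may write $G = H \times K$.

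I do not anticipate any real obstacle here: once the commutator trick is noted, everything else is a routine verification. The only thing to be slightly careful about is writing the commutator argument with correct associativity so that the membership in~$H$ and in~$K$ is manifest from the two different parenthesizations of $h^{-1} k^{-1} h k$.
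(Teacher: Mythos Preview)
Your argument is correct and is the standard textbook proof (commutator trick to show $H$ and $K$ commute elementwise, then the obvious map $(h,k)\mapsto hk$ is a bijective homomorphism). The paper does not actually supply its own proof of this lemma; it simply cites \cite[Theorem~9, p.~171]{DummitFoote}, whose proof is essentially the one you wrote.
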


\begin{lem}[{}{cf.~\cite[Proposition 14, p.~94]{DummitFoote}}] \label{G=HK}
Assume $H$ and~$K$ are subgroups of~$G$. If $G = HK$, then $G = KH$.
\end{lem}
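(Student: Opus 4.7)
The plan is a short direct argument using the fact that $G$ is closed under inversion. First, I would note that the inclusion $KH \subseteq G$ is immediate, since $K$ and~$H$ are both subgroups of~$G$, and $G$ is closed under multiplication.

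For the reverse inclusion, take an arbitrary $g \in G$. Since $G$ is a group, $g^{-1}$ also lies in~$G$, so the hypothesis $G = HK$ lets me write $g^{-1} = hk$ for some $h \in H$ and $k \in K$. Inverting both sides gives
\[
    g = (hk)^{-1} = k^{-1} h^{-1} \in KH,
\]
because $k^{-1} \in K$ and $h^{-1} \in H$ (using that $H$ and~$K$ are subgroups, hence closed under inversion).

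There is really no obstacle here: the only place the subgroup hypothesis is used (beyond the trivial inclusion) is to ensure $h^{-1} \in H$ and $k^{-1} \in K$, and the only place the group structure on~$G$ is used is to guarantee that $g^{-1}$ lies in~$G$. Combining the two inclusions yields $G = KH$, as required.
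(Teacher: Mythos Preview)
Your proof is correct and is the standard argument. The paper does not actually give its own proof of this lemma; it simply cites \cite[Proposition~14, p.~94]{DummitFoote}, whose proof is essentially the inversion trick you used.
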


\section{Colour-preserving automorphisms of complete Cayley graphs} \label{CCASect}

In this section, we recall a result that provides an explicit description of all the colour-preserving automorphisms of any complete Cayley graph $\Cay \bigl( G; G \smallsetminus \{\1\} \bigr)$ (see Theorem~\ref{ClassifThm}).  It was proved by D.\,P.\,Byrne, M.\,J.\,Donner, and T.\,Q.\,Sibley \cite[p.~324]{ByrneDonnerSibley} when $G$ is finite, and it was generalized to infinite groups by P.-H.\,Leemann and M.\,de\,la\,Salle \cite[Thm.~2]{LeemandelaSalle-MRR}. This theorem will be needed in Section~\ref{CompleteCCASect}, so, for completeness, we provide a proof.

\begin{exmp} \label{CompleteEgs}
Here are some important (known) examples of colour-preserving automorphisms of complete Cayley graphs.

	\begin{enumerate} \renewcommand{\theenumi}{\alph{enumi}}
	\item \label{CompleteEgs-abel}
	Assume $G$ is abelian, and define $\iota \colon G \to G$ by $\iota(g) = g^{-1}$. Then $\iota$ a colour-preserving automorphism of the complete graph $\Cay \bigl( G; G \smallsetminus \{\1\} \bigr)$ that fixes~$\1$. It is a group automorphism, and it is nontrivial if there exists $g \in G$, such that $g^2 \neq \1$.
	\item \label{CompleteEgs-Dic}
	Let $G = \Dic(A, z, q)$ be a (finite or infinite) group of dicyclic type, and define $\varphi \colon G \to G$ by $\varphi(q^i a) = q^{-i} a$ for $i \in \Z$ and $a \in A$. Then $\varphi$ is nontrivial colour-preserving automorphism of the complete graph $\Cay \bigl( G; G \smallsetminus \{\1\} \bigr)$ that fixes~$\1$, and it is a group automorphism.
	\item \label{CompleteEgs-Q8}
	Let $G$ be a (finite or infinite) hamiltonian $2$-group, so $G = Q_8 \times B$, for some group $B$ of exponent dividing~$2$. For every $I \subseteq \{i,j,k\}$, there is a unique colour-preserving automorphism~$\varphi_I$ of the complete graph $\Cay \bigl( G; G \smallsetminus \{\1\} \bigr)$ that fixes~$\1$, and satisfies 
		\[ \text{for $\ell \in \{i,j,k\}$, \ $\varphi_I(\ell) = \ell \Leftrightarrow \ell \in I$} . \]
	Furthermore, $\varphi_I$ is a group automorphism if and only if $|I|$ is odd.
	\end{enumerate}
There is some overlap between~\pref{CompleteEgs-Dic} and~\pref{CompleteEgs-Q8}, because any hamiltonian $2$-group is also a dicyclic group. For example, if we write $Q_8 \times B = \Dic \bigl( \langle k, B \rangle, i^2, i \bigr)$, then the automorphism described in~\pref{CompleteEgs-Dic} is equal to~$\varphi_{\{k\}}$.
\end{exmp}

\begin{proof}
All of these examples are included in the main theorem of \cite{ByrneDonnerSibley}, but the notation and general presentation there are very different from ours, so we provide brief justifications for the reader's convenience.

It is easy to see (and well known) that if $\varphi$ is a group automorphism, such that $\varphi(s) \in \{s^{\pm1}\}$ for all $s \in S$, then $\varphi$ is a colour-preserving automorphism of $\Cay(G; S)$ that fixes~$\1$.

\pref{CompleteEgs-abel} Note that $\iota$ is a group automorphism (because $G$ is abelian). Since $\iota(g) = g^{-1}$ is obviously in $\{g^{\pm1}\}$ for all $g \in G$, this implies that $\iota$ is a colour-preserving automorphism of $\Cay(G; S)$.

\pref{CompleteEgs-Dic}
For $a \in A$, we have $\varphi(a) = a$ and 
    \[ \varphi(qa) = q^{-1} a = a^{-1} q^{-1} = (qa)^{-1} . \]
Therefore $\varphi(g) \in \{g^{\pm 1}\}$ for all $g \in G$.

Hence, all that remains is to show that $\varphi$ is a group automorphism.
For $b \in A$, we have 
    \[ \varphi \bigl( (q^i a) b\bigr) 
    = \varphi \bigl( q^i (a b) \bigr) 
    = q^{-i} ab
    = \varphi(q^i a) \, \varphi(b)
    \]
and
    \[ \varphi \bigl( (q^i a) (qb)\bigr) 
    = \varphi \bigl( q^{i+1} (a^{-1} b) \bigr) 
    = q^{-i - 1} a^{-1} b
    = q^{-i} a \cdot q^{-i}b
    = \varphi(q^i a) \, \varphi(qb)
    . \]

\pref{CompleteEgs-Q8}
The uniqueness follows from Lemma~\ref{fixlem}.
So we only prove existence. 
    
    \begin{enumerate} \setcounter{enumi}{-1}
\item Let $\varphi_\emptyset = \iota$, where $\iota(x) = x^{-1}$, as in~\pref{CompleteEgs-abel}. This is not a group automorphism (because $G$ is not abelian). 

However, we can easily show that it is a colour-preserving graph automorphism (and it obviously fixes~$\1$), by using an argument that is taken from the first paragraph of the proof of \cite[Theorem 2.3]{DHKM-CCAodd}. Let $g,s \in G$. Then $\varphi_\emptyset(gs) = (gs)^{-1} = s^{-1} g^{-1}$. In $G  = Q_8 \times B$, every element either inverts or commutes with every other element, so $s^{-1} g^{-1} \in \{g^{-1} s^{\pm1}\} = \{\varphi(g) s^{\pm1}\}$.

    \item For $\ell \in \{i,j,k\}$, we may let $\varphi_{\{\ell\}}$ be the conjugation by~$\ell$. This is a group automorphism.

    \item If $|I| = 2$, we have $I = \{i,j,k\} \smallsetminus \{\ell\}$, for some $\ell \in \{i,j,k\}$. Then $\varphi_I = \varphi_{\{\ell\}} \compose \varphi_{\emptyset}$. This is not a group automorphism, because it is the composition of an automorphism with a non-automorphism.

    \item The automorphism $\varphi_{\{i,j,k\}}$ is the trivial permutation (i.e., the identity map). This is a group automorphism.
    \qedhere \end{enumerate}
\end{proof}

\begin{thm}[Byrne-Donner-Sibley \cite{ByrneDonnerSibley}, Leemann and de\,la\,Salle {\cite[Thm.~2]{LeemandelaSalle-MRR}}] \label{ClassifThm}
Let $G$ be a \textup(finite or infinite\textup) group. If the complete Cayley graph\/ $\Cay \bigl( G; G \smallsetminus \{\1\} \bigr)$ has a nontrivial colour-preserving automorphism  that fixes\/~$\1$, then $G$ is either abelian or dicyclic type. Furthermore, any such colour-preserving automorphism is one of the automorphisms in Example~\ref{CompleteEgs}.
\end{thm}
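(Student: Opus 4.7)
The plan is to analyze a nontrivial colour-preserving automorphism $\varphi$ of $\Cay(G;G\smallsetminus\{\1\})$ fixing $\1$ by extracting algebraic constraints from the colour-preserving equation applied to pairs $(g,s)$, then reading off the structure of $G$ and the shape of $\varphi$.

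The first step is a reduction: substituting $g=\1$ in the defining relation $\varphi(gs)\in\{\varphi(g)\,s^{\pm 1}\}$ (Lemma~\ref{elem}) yields $\varphi(s)\in\{s,s^{-1}\}$ for every $s\in G$. Hence every element of $G$ is either fixed or inverted by $\varphi$, and by Lemma~\ref{fixlem} every involution is fixed. Since $\varphi\neq\id$ there exists $g_0\in G$ with $\varphi(g_0)=g_0^{-1}$ and $g_0^2\neq\1$.

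The second step extracts structural constraints on $G$. For an arbitrary $s\in G\smallsetminus\{\1\}$, combining $\varphi(g_0 s)\in\{g_0^{-1}s,\,g_0^{-1}s^{-1}\}$ with $\varphi(g_0 s)\in\{g_0 s,\,s^{-1}g_0^{-1}\}$ and using $g_0^2\neq\1$, a four-way case check shows that at least one of the following must hold: (i)~$g_0$ commutes with $s$; (ii)~$g_0$ inverts $s$ by conjugation; or (iii)~$s^2=g_0^{-2}$. Re-applying this trichotomy to products of elements (and, when possible, to pairs involving another element $h_0$ with $\varphi(h_0)=h_0^{-1}$), together with the observation that case~(iii) pins all such squares to the single value $g_0^{-2}$, forces $G$ to be either abelian or of dicyclic type. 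In the dicyclic case, the parameters are visible from $g_0$: one takes $z=g_0^2$ (a central involution), $q=g_0$, and $A$ the abelian subgroup of index $2$ consisting of elements inverted by $g_0$, realizing $G=\Dic(A,z,q)$.

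The third step is the classification of $\varphi$ itself. With the structure of $G$ in hand, Lemma~\ref{phi(s^k)} shows that $\varphi$ is determined on each cyclic subgroup by its action on a single edge, so its action on a generating set pins $\varphi$ down and a direct check matches it to one of the explicit maps in Example~\ref{CompleteEgs}. The main obstacle is the hamiltonian $2$-group case $G=Q_8\times B$. Here the fixed set of $\varphi$ need not be a subgroup---for instance $\varphi_{\{i,j\}}$ fixes $\pm i,\pm j$ yet inverts $\pm k=\pm ij$---so one cannot read off a single dicyclic decomposition in a canonical way. Instead, one exploits that $Q_8\times B$ admits three distinct dicyclic realizations (one for each of $i,j,k$), and verifies that every colour-preserving automorphism coincides with a dicyclic-type automorphism of Example~\ref{CompleteEgs}\pref{CompleteEgs-Dic}, with the inversion $\iota$ of Example~\ref{CompleteEgs}\pref{CompleteEgs-abel}, or with a composition of the two, recovering the full list $\{\varphi_I:I\subseteq\{i,j,k\}\}$ in Example~\ref{CompleteEgs}\pref{CompleteEgs-Q8}.
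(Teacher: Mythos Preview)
Your first step is fine, and the trichotomy you extract at the start of step~2 is correct: for a fixed $g_0$ with $\varphi(g_0)=g_0^{-1}\neq g_0$ and any $s\in G$, comparing $\varphi(g_0 s)\in\{g_0^{-1}s^{\pm1}\}$ with $\varphi(g_0 s)\in\{(g_0 s)^{\pm1}\}$ does force one of (i)~$g_0 s=sg_0$, (ii)~$g_0 s g_0^{-1}=s^{-1}$, or (iii)~$s^2=g_0^{-2}$.  The gap is the sentence that follows: ``Re-applying this trichotomy\dots forces $G$ to be either abelian or of dicyclic type.''  That is where all the content lies, and your proposal does not indicate how to carry it out.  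In particular, your recipe for the dicyclic parameters --- take $q=g_0$ and $A=\{s: g_0 s g_0^{-1}=s^{-1}\}$ --- can fail outright: if $G=Q_8$ and $\varphi$ is global inversion (so every order-$4$ element is a legitimate~$g_0$), then with $g_0=i$ the set of elements inverted by~$i$ is $\{\pm1,\pm j,\pm k\}$, which is not a subgroup.  So anchoring the argument to a single element of $F^c=\{g:\varphi(g)=g^{-1}\neq g\}$ is not enough.

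The paper's proof avoids this by organising everything around the \emph{pair} $F=\{g:\varphi(g)=g\}$ and its complement~$F^c$, rather than around one chosen $g_0$.  Applying the colour-preserving relation at a \emph{fixed} point $f\in F$ (instead of at $g_0\in F^c$) yields the single clean statement that every element of~$F^c$ inverts every element of~$F$.  From there the proof splits into three cases: (1)~$\varphi$ is global inversion, handled first and shown to force $G$ abelian or a hamiltonian $2$-group; (2)~$F$ is a subgroup, where one reads off $G=\Dic(F,q^2,q)$ directly with $A=F$; and (3)~$F$ is not a subgroup, which is exactly the hamiltonian $2$-group case you flag in step~3, but now arrived at structurally rather than as an afterthought.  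The dichotomy ``is $F$ a subgroup?'' is what cleanly separates the generic dicyclic situation from the $Q_8\times B$ exception, and it is precisely this organising principle that your single-element approach lacks.  Your step~3 sketch of the $Q_8\times B$ case is reasonable once you are there, but step~2 as written does not get you there.
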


\begin{proof} 
This is similar to the proof of \cite[Thm.~2.3]{DHKM-CCAodd}.

By assumption, we may let $\varphi$ be a nontrivial colour-preserving automorphism of the complete graph $\Cay \bigl( G; G \smallsetminus \{\1\} \bigr)$ that fixes~$\1$.
Let 
    \[ F = \{\, f \in G \mid \varphi(f) = f\,\}, \]
and let $F^c$ be the complement of~$F$. Note that $F$ is a proper subset of~$G$, because $\varphi$ is nontrivial, and that 
    \[ \text{$\varphi(g) = g^{-1}$ for all $g \in F^c$} \]
(because $\varphi$ is colour-preserving and fixes~$\1$).

For all $f \in F$ and $h \in G$, such that $fh \in F^c$, we have
	\begin{align*}
	\varphi(fh) &= (fh)^{-1} = h^{-1} f^{-1} \\
	\intertext{and}
	\varphi(fh) &\in \{ \varphi(f) h^{\pm1} \} = \{ f h^{\pm1} \}
	\end{align*}
However, since $fh \notin F$, we know that $\varphi(fh) \neq fh$. So we must have 
    \[ f h^{-1} = \varphi(fh) = h^{-1} f^{-1} , \]
which means that $h$ inverts~$f$. Therefore $fh$ also inverts~$f$.  Thus, we have shown:
	\begin{align} \label{ClassifThm-Fc}
    \text{every element of~$F^c$ inverts every element of~$F$} .
	\end{align}

\refstepcounter{caseholder}

\begin{case} \label{ClassifThm-invert}
Assume $\varphi(x) = x^{-1}$ for all $x \in G$.
\end{case}
For all $g,h \in G$, we have
	\begin{align} \label{ClassifThm-gh=hg}
    h^{-1} g^{-1} = (gh)^{-1} = \varphi(gh) \in \{ \varphi(g) h^{\pm1} \} = \{g^{-1} h^{\pm1} \} \subseteq g^{-1} \langle h \rangle 
    , \end{align}
so $g$ normalizes~$\langle h \rangle$. Since $g$ and~$h$ are arbitrary elements of~$G$, this implies that every cyclic subgroup of~$G$ is normal, from which it follows that every subgroup of~$G$ is normal. Therefore, $G$ is either abelian or hamiltonian. 

We may assume $G$ is hamiltonian, for otherwise $G$ and~$\varphi$ are described in Example~\fullref{CompleteEgs}{abel}. 
So $G = Q_8 \times A \times B$, for some torsion abelian group~$A$ in which every element has odd order, and some group~$B$ of exponent dividing~$2$ (see Definition~\ref{HamGrp}). 

We may assume $A$ is nontrivial, for otherwise $G$ and~$\varphi$ are described in Example~\fullref{CompleteEgs}{Q8} (with $\varphi = \iota = \varphi_\emptyset$).
Now, let $a$ be a nontrivial element of~$A$. For $g,h \in G$, the calculation in~\pref{ClassifThm-gh=hg} shows that $h^{-1} g^{-1} \in \{g^{-1} h^{\pm1} \}$. By taking inverses, we conclude that $gh \in \{h^{\pm1} g\}$. Therefore, if $gh \neq hg$, then $gh = h^{-1} g$. Letting $g = j$ and $h = ia$, we conclude that
    \[ jia 
        = gh 
        = h^{-1}g 
        = (i^{-1} a^{-1}) j
        = ji a^{-1}
        ,\]
so $a = a^{-1}$. This contradicts the fact that every element of~$A$ has odd order.

\begin{note*}
Henceforth, we may assume that Case~\ref{ClassifThm-invert} does not apply, so there is some $x \in G$, such that $\varphi(x) \neq x^{-1}$. Then $\varphi(x) = x$, so $x \in F$. Since $\varphi(x) \neq x^{-1}$, we also see that $x \neq x^{-1}$. Hence, inverting the elements of~$F$ is not the same as centralizing the elements of~$F$. So we conclude from~\pref{ClassifThm-Fc} that $F$ contains its centralizer $C_G(F)$.

For any $q \in F^c$, we know that $q^2$ centralizes~$F$ (because $q$ inverts~$F$), so $q^2 \in C_G(F) \subseteq F$. This implies that $q$ inverts~$q^2$. However, $q$ obviously centralizes~$q^2$. So it must be the case that $q^{-2} = q^2$, which means $|q^2| \in \{1,2\}$. However, we also know from Lemma~\fullref{fixlem}{1} that $|q| > 2$, since $\varphi(q) \neq q$. So we conclude that $|q^2| = 2$. Thus:
    \begin{align} \label{ClassifThmPf-order4}
    \text{$|q| = 4$ \ for all $q \in F^c$.}
    \end{align}

Furthermore, for all $g,h \in F^c$, we know that $g$ and~$h$ both invert~$F$, so $gh^{-1} \in C_G(F)$. This implies that all elements of~$F^c$ are in the same coset of $C_G(F)$:
    \[ F^c \subseteq C_G(F) \, q
    \quad \text{for all $q \in F^c$} . \]
\end{note*}

\begin{case}
Assume $F$ is a subgroup of~$G$.
\end{case}
We will show that $G$ is of dicyclic type, and $\varphi$ is the automorphism in Example~\fullref{CompleteEgs}{Dic}.

Fix some $q \in F^c$. 
    \begin{itemize}
        \item Since $F^c \subseteq C_G(F) \, q \subseteq Fq$, we see that all elements of~$F^c$ are in the same coset of~$F$. This means $|G : F| = 2$. 
    
    \item Since $q$ inverts~$F$, and conjugation by~$q$ is an automorphism, we see that inversion is an automorphism of~$F$. This implies that $F$ is abelian.

    \item Let $z = q^2$. We know from the above note that $|q| = 4$, so $|z| = 2$.
    \end{itemize}
We have now established that $G = \Dic(F, z, q)$. 

So it only remains to show that $\varphi(q^i f) = q^{-i} f$ for all $i \in \Z$ and $f \in F$. 
    \begin{itemize}
        \item If $i$ is even, then $q^i f \in F$, so $\varphi(q^i f) = q^i f$. However, since $i$ is even and $|q| = 4$, we also have $q^i = q^{-i}$.
        \item If $i$ is odd, then $q^i f \in F^c$, so 
            \[ \varphi(q^i f) = (q^i f)^{-1} = f^{-1} q^{-i} = q^{-i} f , \]
        since $q$ inverts every element of~$f$ (and $i$ is odd).
    \end{itemize}

\begin{case}
Assume $F$ is not a subgroup of~$G$.
\end{case}
For convenience, let $C = C_G(F)$ be the centralizer of~$F$. As mentioned in the above note, 
we have $C \subseteq F$, and we have $F^c \subseteq Cq$ for every $q \in F^c$.

Since $F^c \subseteq Cq$, we know that $F$ contains all of the cosets of~$C$ except~$Cq$. Also, since $F$ is not a subgroup, we know that $\langle F \rangle$ is strictly larger than~$F$, which means it contains some element of~$F^c$, and therefore contains some element of~$Cq$. Since $C \subseteq F$, this implies that $\langle F \rangle$ contains~$Cq$. It also contains~$F$, so it contains all of the other cosets of~$C$, as well. Therefore $\langle F \rangle = G$. This implies that $C$ is the centre of~$G$.

Thus, all elements of~$F^c$ are in the same coset of the centre, so
    \[ \text{the elements of~$F^c$ commute with each other} . \]
Also recall (see~\pref{ClassifThm-Fc}) that
    \[ \text{every element of~$F^c$ inverts every element of~$F$} . \]

Now, let $f_1, f_2 \in F$. If $f_1 f_2 \notin F$, then it inverts both $f_1$ and~$f_2$, which implies that $f_1$ inverts~$f_2$ and $f_2$ inverts~$f_1$. On the other hand, if $f_1 f_2 \in F$, then the conjugation action of any element of~$F^c$ respects multiplication, but inverts $f_1$, $f_2$, and~$f_1f_2$, so we must have $f_1 f_2 = f_2 f_1$. Therefore, if $f_1$ and~$f_2$ are any elements of~$F$, then 
	\[ \text{either $f_1$ and~$f_2$ commute, or they invert each other} . \]

Now consider any $f \in F$, such that $f \notin C$. Then $qf \notin qC$, so $qf \notin F^c$. This means $qf \in F$, so 
    \[ qf 
    = \varphi(qf)
    \in \{ \varphi(q) \, f^{\pm1} \}
    = \{ q^{-1} \, f^{\pm1} \}
    , \]
so $q^2 \in \{\1, f^{-2}\}$. However, we know from~\pref{ClassifThmPf-order4} that $|q| = 4$, so $q^2 \neq \1$. So we conclude that 
    \begin{align} \label{ClassifThmPf-a2=f2}
    q^2 = f^{-2}
    . \end{align}
Therefore (recalling that $q$ inverts~$f$) we have
    \[ f^{-1} q f = f^{-2} q = q^2 \, q = q^3 \in \langle q \rangle . \]
So $f$ normalizes~$\langle q \rangle$. Since this is also clearly true for the elements of~$F$ that are in~$C$, we conclude that
    \[ \text{if $f \in F$ and $q \in F^c$, then $f$ normalizes~$\langle q \rangle$.} \]

We have now shown that $g$ normalizes~$\langle h \rangle$ for all $g,h \in G$. This means that every cyclic subgroup of~$G$ is normal. It follows easily that every subgroup of~$G$ is normal. (Also, $G$ is not abelian, because elements of~$F^c$ invert elements of~$F$.) Therefore $G$ is a hamiltonian group (see Definition~\ref{HamGrp}).

We claim that $G$ is a 2-group. To see this, write $G = Q_8 \times A \times B$, as in Definition~\ref{HamGrp}. We wish to show that $A$ is trivial. Suppose not. Then, since $F$ contains all but one coset of the centre, there is some $f \in F$, such that $f \notin C$, and the $A$-component of~$f$ is nontrivial. From~\pref{ClassifThmPf-a2=f2} we know that $|f^2| = 2$, which contradicts the fact that the $A$-component of~$f^2$ is nontrivial.

All elements of the centre~$C$ of~$G$ have order~$2$. Therefore, since $\varphi(q) = q^{-1}$ (and $\varphi$ is colour-preserving), we have $\varphi(qc) = \varphi(q) c^{\pm1} = q^{-1} c = (qc)^{-1}$ for all $c \in C$. So $F^c$ is the entire coset $qC$. Hence, $F^c$ contains some $\ell \in \{i,j,k\}$. Then it is easy to see that $\varphi$ is the permutation $\varphi_I$ defined in Example~\fullref{CompleteEgs}{Q8}, where $I = \{i,j,k\} \smallsetminus \{\ell\}$.
\end{proof}

The above theorem provides another proof of the (known) implication $(\ref{equiv-notHam} \Rightarrow \ref{equiv-isCCA})$ of Theorem~\ref{equiv}, which was discussed in Note~\fullref{equivNotes}{finiteCCA} of the introduction:

\begin{cor}[Byrne et al.\ {\cite[cf.\ p.~324]{ByrneDonnerSibley}}, Dobson et al.\ {\cite[Thm.~2.3]{DHKM-CCAodd}}] \label{CCAiffNotHam}
If the complete graph\/ $\Cay \bigl( G; G \smallsetminus \{\1\} \bigr)$ is not CCA, then $G$ is a hamiltonian $2$-group.
\end{cor}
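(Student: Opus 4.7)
The plan is to reduce the corollary directly to the classification in Theorem~\ref{ClassifThm} and Example~\ref{CompleteEgs}. Assume that the complete graph $\Cay \bigl( G; G \smallsetminus \{\1\}\bigr)$ is not CCA. By definition, this means there exists a colour-preserving automorphism $\psi$ of this graph that is not affine.

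First, I would normalize $\psi$ so that it fixes $\1$. Let $g = \psi(\1)$ and define $\psi' = L_{g^{-1}} \compose \psi$. Since left translations are colour-preserving (Lemma~\fullref{elem}{}), $\psi'$ is still a colour-preserving automorphism, and clearly $\psi'(\1) = \1$. Moreover, $\psi'$ is not affine, for otherwise $\psi' = \alpha \compose L_h$ for some $\alpha \in \Aut G$ and $h \in G$ would give $\psi = L_g \compose \alpha \compose L_h$, which is affine (using that $L_g \compose \alpha = \alpha \compose L_{\alpha^{-1}(g)}$ and the composition of left translations is again a left translation). In particular, $\psi'$ is nontrivial and is \emph{not} a group automorphism of~$G$ (since group automorphisms are affine, with $g = \1$).

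Next, I apply Theorem~\ref{ClassifThm} to $\psi'$. The theorem forces $G$ to be either abelian or of dicyclic type, and $\psi'$ must be one of the three types of automorphisms listed in Example~\ref{CompleteEgs}. The automorphisms in Example~\fullref{CompleteEgs}{abel} and Example~\fullref{CompleteEgs}{Dic} are explicitly stated to be group automorphisms of~$G$; these are ruled out since $\psi'$ is not a group automorphism. Consequently, $\psi'$ falls under Example~\fullref{CompleteEgs}{Q8}, which immediately gives $G \iso Q_8 \times B$ where $B$ has exponent dividing $2$; that is, $G$ is a hamiltonian $2$-group.

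There is no serious obstacle here: the heavy lifting has been done by Theorem~\ref{ClassifThm}, and the only thing to check is the bookkeeping that a non-affine colour-preserving automorphism can be translated to fix $\1$ while remaining non-affine (and therefore non-group-automorphism). The only case in the classification producing a colour-preserving automorphism that is \emph{not} a group automorphism is the hamiltonian $2$-group case (specifically, the $\varphi_I$ with $|I|$ even), which is precisely the desired conclusion.
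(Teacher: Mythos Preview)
Your proof is correct and follows essentially the same route as the paper's: normalize so that the non-affine colour-preserving automorphism fixes~$\1$, invoke Theorem~\ref{ClassifThm}, and observe that parts~\pref{CompleteEgs-abel} and~\pref{CompleteEgs-Dic} of Example~\ref{CompleteEgs} yield group automorphisms, forcing part~\pref{CompleteEgs-Q8}. The only difference is that you spell out explicitly why the translated map remains non-affine, which the paper leaves implicit.
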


\begin{proof}
Since $\Cay \bigl( G; G \smallsetminus \{\1\} \bigr)$ is not CCA, there exists a colour-preserving automorphism $\varphi$ of this complete Cayley graph, such that $\varphi$ is not affine. After composing with a translation, we may assume $\varphi(\1) = \1$. Since $\varphi$ is not affine, it is obviously not trivial, so Theorem~\ref{ClassifThm} tells us that $G$ is abelian or dicyclic, and that $\varphi$ is one of the automorphisms in Example~\ref{CompleteEgs}. However, since $\varphi$ is not affine, it is not a group automorphism, so it is not listed in part \pref{CompleteEgs-abel} or~\pref{CompleteEgs-Dic} of that example. Hence, it must be part~\pref{CompleteEgs-Q8} that applies, so $G$ is a hamiltonian $2$-group.
\end{proof}

\section{Complete graphs that are not strongly CCA} \label{CompleteCCASect}

\begin{notation} \label{ANotation}
Let $X = \Cay(G; S)$ be a Cayley graph of~$G$.
	\begin{enumerate}
	\item $\caut(X)$ is the group of all colour-permuting automorphisms of~$X$.
	\item $\caut^0(X)$ is the group of all colour-preserving automorphisms of~$X$.
	\item $\caut_\1(X)$ and $\caut^0_\1(X)$ are the stabilizers of the vertex~$\1$ in $\caut(X)$ and $\caut^0(X)$, respectively.
	\item For $\varphi \in \caut(X)$, and $g \in G$, let $\pi^\varphi_g$ be the permutation of~$S$ that is induced by $\varphi$ at the vertex~$g$. That is, for all $s \in S$, we have
		\[ \varphi(gs) = \varphi(g) \, \pi^\varphi_g(s) . \]
    \item For $\varphi \in \caut(X)$, let $\overline{\pi}^\varphi$ be the permutation of the colours that is induced by~$\varphi$. Thus, for all $g \in G$ and $s \in S$, we have
        \[ \overline{\pi} ^\varphi (\{s^{\pm1}\}) = \{ \pi^\varphi_g(s) ^{\pm1}\} . \]
	\item For $g \in G$, let $L_g$ be the left-translation by~$g$, so $L_g(x) = gx$ for all $x \in G$. This is a colour-preserving automorphism of every Cayley graph of~$G$.
	\end{enumerate}
\end{notation}

We have the following elementary (and well-known) observation:

\begin{lem} \label{AutPerm}
For $\varphi \in \caut_\1(X)$, we have $\varphi \in \Aut G$ if and only if $\pi^\varphi_g = \pi^\varphi_h$ for all $g,h \in G$.
\end{lem}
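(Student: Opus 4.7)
The plan is a direct manipulation of the defining identity $\varphi(gs) = \varphi(g)\,\pi^\varphi_g(s)$, which records exactly how $\varphi$ interacts with right-multiplication by elements of~$S$.

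For the forward direction, assuming $\varphi \in \Aut G$, the homomorphism property (together with $\varphi(\1) = \1$) gives $\varphi(gs) = \varphi(g)\,\varphi(s)$ for all $g \in G$ and $s \in S$. Comparing this with the defining identity forces $\pi^\varphi_g(s) = \varphi(s)$, which is manifestly independent of~$g$, so $\pi^\varphi_g = \pi^\varphi_h$ for all $g, h \in G$.

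For the converse, suppose all $\pi^\varphi_g$ coincide with a single permutation $\pi$ of~$S$. Setting $g = \1$ in the defining identity (and using $\varphi(\1) = \1$) yields $\pi(s) = \varphi(s)$ for every $s \in S$, hence $\varphi(gs) = \varphi(g)\,\varphi(s)$ for every $g \in G$ and every $s \in S$. A routine induction on word length then extends this to
\[ \varphi(g s_1 s_2 \cdots s_n) = \varphi(g)\,\varphi(s_1)\,\varphi(s_2)\cdots\varphi(s_n) \]
for all $g \in G$ and all $s_1, \ldots, s_n \in S$; specialising $g = \1$ gives $\varphi(s_1 \cdots s_n) = \varphi(s_1)\cdots\varphi(s_n)$, and therefore $\varphi(gh) = \varphi(g)\,\varphi(h)$ for every $g \in G$ and every $h \in \langle S \rangle$.

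I expect no real obstacle: the argument is essentially an unpacking of definitions, with the word-length induction being the only moving part. The one subtlety worth flagging is that the conclusion $\varphi \in \Aut G$ implicitly uses $\langle S \rangle = G$ (equivalently, that the Cayley graph is connected), which holds in all applications of interest here, in particular for the complete Cayley graph $\Cay \bigl( G; G \smallsetminus \{\1\} \bigr)$ where $S = G \smallsetminus \{\1\}$ obviously generates~$G$; in that case $\varphi$ is already a bijection of~$G$ by hypothesis, so the multiplicativity above upgrades it to a group automorphism.
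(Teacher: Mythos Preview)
Your argument is correct. The paper does not actually supply a proof of this lemma: it is introduced as an ``elementary (and well-known) observation'' and left unproved, so there is no paper proof to compare against. Your unpacking of the defining identity $\varphi(gs)=\varphi(g)\,\pi^\varphi_g(s)$, specialisation at $g=\1$, and word-length induction is the standard justification.

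You are also right to flag the connectedness hypothesis. As the lemma is stated for an arbitrary $X=\Cay(G;S)$, the reverse implication genuinely needs $\langle S\rangle=G$; otherwise the condition $\pi^\varphi_g=\pi^\varphi_h$ only controls $\varphi$ on the component of~$\1$. (For a concrete failure, take $G=\Z_4$ with $S=\emptyset$: every permutation fixing~$0$ satisfies the hypothesis vacuously, but not all such permutations are automorphisms.) In the paper this is harmless, since the only use of the lemma is inside Lemma~\ref{StrongIfNoStab}, where the hypothesis $\langle S^*\rangle=G$ forces $\langle S\rangle=G$; but it is a genuine implicit assumption, and your noting it is appropriate rather than overcautious.
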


\begin{lem} \label{MakeColPres}
Given $\varphi \in \caut_\1(X)$, $g \in G$, and $s \in S$, let
    \[ \psi = L_{g}^{-1} \, \varphi^{-1} \, L_{\varphi (g)} \, L_{\varphi (gs)}^{-1} \,  \varphi \, L_{gs} . \]
Then $\psi \in \caut^0_\1(X)$ and $\psi(s) = s$.
\end{lem}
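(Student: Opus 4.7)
The plan is to verify three things: (i) $\psi$ is an automorphism of $X$ that is colour-preserving, (ii) $\psi(\1) = \1$, and (iii) $\psi(s) = s$. Each factor in the definition of $\psi$ is either a left translation (hence a colour-preserving automorphism by Lemma~\ref{elem}) or $\varphi^{\pm 1}$ (hence a colour-permuting automorphism), so $\psi$ is certainly a colour-permuting automorphism; the real content of (i) is that the induced colour-permutation is trivial.

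For (i), I would use the observation that for any two colour-permuting automorphisms $\alpha,\beta$ of~$X$, the colour-permutation induced by the composition satisfies $\overline{\pi}^{\alpha \compose \beta} = \overline{\pi}^{\alpha} \compose \overline{\pi}^{\beta}$, and that $\overline{\pi}^{L_h}$ is the identity permutation of colours for every $h \in G$. Applying this to
\[ \psi = L_{g}^{-1} \, \varphi^{-1} \, L_{\varphi (g)} \, L_{\varphi (gs)}^{-1} \,  \varphi \, L_{gs} , \]
the two colour-preserving translations on either side of each $\varphi^{\pm1}$ contribute the identity, so $\overline{\pi}^\psi = \overline{\pi}^{\varphi^{-1}} \compose \overline{\pi}^{\varphi} = \id$. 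Thus $\psi \in \caut^0(X)$.

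For (ii) and (iii), the plan is a direct chase: read $\psi$ from right to left and track the image of the relevant vertex. For (ii), $L_{gs}$ sends $\1 \mapsto gs$, then $\varphi$ gives $\varphi(gs)$, then $L_{\varphi(gs)}^{-1}$ returns to~$\1$, then $L_{\varphi(g)}$ produces $\varphi(g)$, $\varphi^{-1}$ returns to~$g$, and $L_g^{-1}$ returns to~$\1$. For (iii), writing $t = \pi^\varphi_g(s)$, the step $\varphi(gs^2) = \varphi(g)\,t^2$ follows from Lemma~\ref{phi(s^k)}; then $L_{\varphi(gs)}^{-1}$ simplifies $\varphi(g)\,t^2$ to~$t$, $L_{\varphi(g)}$ gives $\varphi(g)\,t = \varphi(gs)$, and the remaining $\varphi^{-1}$ and $L_g^{-1}$ return this to~$s$.

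The computation is essentially bookkeeping, so there is no real obstacle. The only place any nontrivial earlier result is used is Lemma~\ref{phi(s^k)}, applied to evaluate $\varphi(gs^2)$ in the verification of $\psi(s)=s$; this is what makes the particular conjugation-like combination in the definition of~$\psi$ collapse to the identity on~$s$.
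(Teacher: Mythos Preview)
Your proposal is correct and follows essentially the same approach as the paper: both verify $\overline{\pi}^\psi = \id$ via multiplicativity of the induced colour-permutation, both chase $\psi(\1)$ directly, and both use Lemma~\ref{phi(s^k)} for the final fixed-point claim. The only cosmetic difference is that the paper chases $s^{-1}$ through the composition (so that the first application of~$\varphi$ lands on $\varphi(g)$, avoiding the need to evaluate $\varphi(gs^2)$) and then deduces $\psi(s)=s$ from $\psi(s^{-1})=s^{-1}$ via Lemma~\ref{phi(s^k)}, whereas you chase $s$ directly and invoke Lemma~\ref{phi(s^k)} to compute $\varphi(gs^2)=\varphi(g)\,t^2$; both routes are equally valid and of the same length.
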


\begin{proof}
It is clear that $\psi$ is colour-permuting, because it is a composition of colour-permuting automorphisms. Furthermore, letting $\id$ denote a trivial permutation, we have
    \begin{align*}
    \overline\pi^\psi
    &= \overline\pi^{L_{g}^{-1}} \compose \overline\pi^{\varphi^{-1}} \compose \overline\pi^{L_{\varphi (g)}} \compose \overline\pi^{L_{\varphi (gs)}^{-1}} \compose  \overline\pi^{\varphi} \compose \overline\pi^{L_{gs}}
    \\[-0.5\baselineskip]
    &= \id \compose \overline\pi^{\varphi^{-1}} \compose \id \compose \id \compose \overline\pi^{\varphi} \compose \id
        && \begin{pmatrix} \text{translations are} \\ \text{colour-preserving} \end{pmatrix}
    \\[-0.5\baselineskip]
    &= \overline\pi^{\varphi^{-1} \varphi}
    \\&= \overline\pi^{\id}
    \\&= \id
    . \end{align*}
So the colour-permuting automorphism~$\psi$ is actually colour-preserving.

Also, a straightforward calculation shows that $\psi(\1) = \1$.
Therefore $\psi \in \caut^0_\1(X)$.

Furthermore,
    \begin{align*}
    \psi(s^{-1})
    &= L_{g}^{-1} \, \varphi^{-1} \, L_{\varphi (g)} \, L_{\varphi (gs)}^{-1} \,  \varphi \, L_{gs} (s^{-1})
    \\&= L_{g}^{-1} \, \varphi^{-1} \, L_{\varphi (g)} \, L_{\varphi (gs)}^{-1} \,  \varphi(g)
    \\&= L_{g}^{-1} \, \varphi^{-1} \, L_{\varphi (g)} \bigl( \varphi (gs)^{-1} \,  \varphi(g) \bigr)
    \\&= L_{g}^{-1} \, \varphi^{-1} \, L_{\varphi (g)} \bigl( \pi^\varphi_g(s)^{-1} \bigr)   
    \\&= L_{g}^{-1} \, \varphi^{-1} \, L_{\varphi (g)} \bigl( \pi^\varphi_g(s^{-1}) \bigr)   
        && \text{(Lemma~\ref{phi(s^k)} with $g = \1$ and $n = -1$)}
    \\&= L_{g}^{-1} \, \varphi^{-1} \bigl( \varphi (g s^{-1}) \bigr)   
    \\&= L_{g}^{-1} (g s^{-1})
    \\&= s^{-1}
    . \end{align*}
Now, it follows from Lemma~\ref{phi(s^k)} (with $g = \1$ and $n = -1$) that $\psi(s) = s$.
\end{proof}

\begin{lem} \label{StrongIfNoStab}
Let $X = \Cay(G; S)$ be a Cayley graph of~$G$, and let 
	\[ S^* = \bigl\{ s \in S \mid \text{no nontrivial element of $\caut^0_\1(X)$ fixes~$s$} \bigr\} . \]
If $S^*$ generates~$G$, then $X$ is strongly CCA.
\end{lem}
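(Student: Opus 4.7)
The plan is to show that every colour-permuting automorphism $\varphi$ of $X$ is affine. After composing with a suitable left translation, we may assume $\varphi \in \caut_\1(X)$, so it suffices to show $\varphi \in \Aut G$. By Lemma~\ref{AutPerm}, this reduces to proving that the local permutation $\pi^\varphi_g$ is the same at every vertex $g$.

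The key step is to show that for every $s \in S^*$ and every $g \in G$, we have $\pi^\varphi_{gs} = \pi^\varphi_g$. To prove this, I would invoke Lemma~\ref{MakeColPres} with the given $g$ and $s$ to produce the colour-preserving automorphism
\[ \psi = L_g^{-1} \, \varphi^{-1} \, L_{\varphi(g)} \, L_{\varphi(gs)}^{-1} \, \varphi \, L_{gs} \in \caut^0_\1(X), \]
which fixes $s$. Because $s \in S^*$, the hypothesis forces $\psi = \id$. Evaluating $\psi$ on an arbitrary $t \in S$ via the definition gives the chain
\[ \psi(t) = L_g^{-1} \varphi^{-1} L_{\varphi(g)} L_{\varphi(gs)}^{-1} \varphi(gst) = (\pi^\varphi_g)^{-1}\bigl(\pi^\varphi_{gs}(t)\bigr), \]
where one uses the definition of $\pi^\varphi_{gs}$ to identify the middle terms and uses $\varphi$'s colour-permuting nature to invert through $\varphi$. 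Setting this equal to $t$ for all $t \in S$ yields $\pi^\varphi_{gs} = \pi^\varphi_g$, as desired.

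Once the key step is in place, I would finish by a straightforward induction on word length. First, note that $S^*$ is inverse-closed: if a colour-preserving $\psi \in \caut^0_\1(X)$ fixes $s^{-1}$, then Lemma~\ref{phi(s^k)} (applied at $g = \1$) forces $\psi$ to act as the identity on the entire $\langle s \rangle$-component through $\1$, hence $\psi(s) = s$. So $s^{-1} \in S^* \Leftrightarrow s \in S^*$. Since $S^*$ generates $G$ by hypothesis, any $x \in G$ may be written as $x = s_1 s_2 \cdots s_n$ with $s_i \in S^*$, and iterated application of the key step yields
\[ \pi^\varphi_x = \pi^\varphi_{s_1 \cdots s_{n-1}} = \cdots = \pi^\varphi_\1. \]
Then Lemma~\ref{AutPerm} gives $\varphi \in \Aut G$, completing the proof.

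The only technical obstacle is the bookkeeping calculation identifying $\psi(t)$ with $(\pi^\varphi_g)^{-1}(\pi^\varphi_{gs}(t))$; this is routine but requires careful tracking of how $\varphi^{-1}$ moves the edge of colour $\{\pi^\varphi_{gs}(t)^{\pm 1}\}$ incident at $\varphi(g)$ back to the edge of colour $\{t^{\pm 1}\}$ incident at~$g$. Everything else is formal manipulation with the lemmas already established.
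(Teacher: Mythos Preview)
Your proposal is correct and follows essentially the same route as the paper's proof: reduce to $\varphi\in\caut_\1(X)$, invoke Lemma~\ref{MakeColPres} to obtain $\psi\in\caut^0_\1(X)$ fixing~$s$, use $s\in S^*$ to force $\psi=\id$, read off $\pi^\varphi_g=\pi^\varphi_{gs}$, and then propagate this along words in~$S^*$ to apply Lemma~\ref{AutPerm}. The paper reaches $\pi^\varphi_g(t)=\pi^\varphi_{gs}(t)$ by rewriting $\psi=\id$ as $L_{\varphi(g)}^{-1}\varphi L_g = L_{\varphi(gs)}^{-1}\varphi L_{gs}$ rather than computing $\psi(t)=(\pi^\varphi_g)^{-1}\bigl(\pi^\varphi_{gs}(t)\bigr)$ directly, and it omits your inverse-closure remark (which is true but unnecessary, since $\pi^\varphi_g=\pi^\varphi_{gs}$ already lets you step both forward and backward along $S^*$-edges); these are purely cosmetic differences.
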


\begin{proof}
Given $\varphi \in \caut_\1(X)$, we wish to show that $\varphi \in \Aut G$.
Fix some $g \in G$ and $s \in S^*$, and, as in Lemma~\ref{MakeColPres}, let 
    \[ \psi = L_{g}^{-1} \, \varphi^{-1} \, L_{\varphi (g)} \, L_{\varphi (gs)}^{-1} \,  \varphi \, L_{gs} .\]
so $\psi \in \caut^0_\1(X)$ and $\psi(s) = s$. Since $s \in S^*$, it follows from the definition of~$S^*$ that $\psi$ is the identity map. Thus, for all $t \in S$, we have
$\psi(t) = t$, so, by the definition of~$\psi$, we have
    \[ L_{\varphi(g)}^{-1} \varphi L_g(t) 
    = L_{\varphi(gs)}^{-1} \varphi L_{gs}(t) ,\]
which means
    $\pi^\varphi_g(t) = \pi^\varphi_{gs}(t)$.
Therefore 
    $\pi^\varphi_g = \pi^\varphi_{gs}$. 
Since $s$ is an arbitrary element of~$S^*$ and $S^*$ generates~$G$, repeated application of this fact implies that $\pi^\varphi_{g} = \pi^\varphi_{h}$ for all $g,h \in G$. Therefore $\varphi \in \Aut(G)$ (see Lemma~\ref{AutPerm}).
\end{proof}

\begin{defn}[{\cite[p.~8]{Wielandt}}]
A permutation group is \emph{semiregular} if no nonidentity element of the group has any fixed points.
\end{defn}

\begin{cor} \label{SemiregularIsStrong}
Let $X = \Cay(G; S)$ be a connected Cayley graph of~$G$. If the action of $\caut^0_\1(X)$ on~$S$ is semiregular, then $X$ is strongly CCA.
\end{cor}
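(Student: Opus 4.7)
The plan is to apply Lemma~\ref{StrongIfNoStab} directly, after a trivial translation of the hypothesis. Recall that Lemma~\ref{StrongIfNoStab} concludes that $X$ is strongly CCA whenever the set
\[ S^* = \bigl\{ s \in S \mid \text{no nontrivial element of $\caut^0_\1(X)$ fixes~$s$} \bigr\} \]
generates~$G$. So the task is just to verify that semiregularity of the action of $\caut^0_\1(X)$ on~$S$ forces $S^* = S$, and then use connectedness of~$X$.

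First, I would unpack the definition of semiregularity: a nonidentity element of $\caut^0_\1(X)$ has no fixed points in~$S$. Equivalently, the stabilizer in $\caut^0_\1(X)$ of every $s \in S$ is trivial. This is precisely the condition defining~$S^*$, applied to every element of~$S$, so $S^* = S$.

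Next, since $X = \Cay(G;S)$ is assumed to be connected, the set~$S$ generates~$G$. Combining these two facts, $S^* = S$ generates~$G$, which is exactly the hypothesis of Lemma~\ref{StrongIfNoStab}. That lemma then immediately yields that $X$ is strongly CCA, completing the proof. There is no real obstacle here—the corollary is simply the semiregular specialization of the more flexible Lemma~\ref{StrongIfNoStab}, and its content lies entirely in that lemma and in Lemma~\ref{MakeColPres} feeding it.
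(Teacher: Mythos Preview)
Your proposal is correct and matches the paper's proof exactly: the paper also simply observes that semiregularity gives $S^* = S$, that connectedness makes $S$ generate~$G$, and then invokes Lemma~\ref{StrongIfNoStab}.
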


\begin{proof}
In this case, we have $S^* = S$. Since $X$ is connected, we know that $S$ generates~$G$.
\end{proof}

\begin{cor} \label{NoA0=>strong}
Let $X = \Cay(G; S)$ be a Cayley graph of~$G$. If $\caut^0_\1(X)$ is trivial, then $X$ is strongly CCA.
\end{cor}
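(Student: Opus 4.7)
The plan is to deduce this as a short application of Lemma~\ref{StrongIfNoStab}. The key observation is that triviality of $\caut^0_\1(X)$ makes the defining condition of~$S^*$ in Lemma~\ref{StrongIfNoStab} vacuous: no $s \in S$ can be fixed by a nontrivial element of $\caut^0_\1(X)$ for the simple reason that no such element exists. Hence $S^* = S$.

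To invoke Lemma~\ref{StrongIfNoStab} I still need $S$ to generate~$G$, and I would verify this contrapositively. Suppose $H := \langle S \rangle$ is a proper subgroup; then $X$ decomposes as the disjoint union of its cosets, each isomorphic as a coloured graph to $\Cay(H;S)$ via $gh \leftrightarrow h$. If $H \neq \{\1\}$, I build a nontrivial element~$\psi$ of $\caut^0_\1(X)$ as follows: fix a coset $gH \neq H$ and an element $h_0 \in H \smallsetminus \{\1\}$, and let $\psi$ act as the identity on every vertex outside~$gH$ and as $gh \mapsto gh_0 h$ on~$gH$. Under the identification of~$gH$ with $\Cay(H;S)$, this restricted map is nothing but the left translation~$L_{h_0}$, which is colour-preserving by Lemma~\ref{elem}; hence $\psi \in \caut^0_\1(X)$, and it is nontrivial (it sends $g$ to $gh_0\neq g$), contradicting the hypothesis. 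In the degenerate case $H = \{\1\}$ (so $S = \emptyset$), every permutation of~$G$ is colour-preserving, so triviality of $\caut^0_\1(X)$ forces $|G| \le 2$, in which case $X$ is trivially strongly CCA.

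With $S^* = S$ and $\langle S \rangle = G$ in hand, Lemma~\ref{StrongIfNoStab} immediately yields that $X$ is strongly CCA. I expect the only real obstacle to be the bookkeeping in the auxiliary argument that $\langle S \rangle = G$: specifically, verifying that the piecewise map~$\psi$ is a well-defined colour-preserving graph automorphism globally, not merely on~$gH$. Once that is confirmed, the rest of the proof is essentially one line.
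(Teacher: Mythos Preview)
Your proof is correct and follows essentially the same route as the paper: both reduce to Lemma~\ref{StrongIfNoStab} (the paper routes through Corollary~\ref{SemiregularIsStrong}, which is itself a one-line application of Lemma~\ref{StrongIfNoStab}) and then handle the side issue that $\langle S\rangle$ must equal~$G$. The paper disposes of that side issue with the bare assertion that triviality of $\caut^0_\1(X)$ forces $X$ to be connected or to have at most two vertices; your explicit construction of a nontrivial $\psi\in\caut^0_\1(X)$ by translating a single non-identity coset is exactly the verification the paper leaves as ``easy to see.''
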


\begin{proof}
Any trivial permutation group is obviously semiregular, so this is almost immediate from Corollary~\ref{SemiregularIsStrong}. The only issue (which is minor) is that the corollary requires $X$ to be connected. However, it is easy to see that  if $\caut^0_\1(X)$ is trivial, then either $X$ is connected, or $X$ has only two vertices.
\end{proof}

We can now establish the implication $(\ref{equiv-notHam} \Rightarrow \ref{equiv-isStrong})$ of Theorem~\ref{equiv}.

\begin{cor} \label{NotStrongIff}
If $G$ is not a hamiltonian $2$-group, then\/ $\Cay \bigl( G; G \smallsetminus \{\1\} \bigr)$ is strongly CCA.
\end{cor}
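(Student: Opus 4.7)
The plan is to reduce to Lemma~\ref{StrongIfNoStab} applied to $X = \Cay \bigl( G; G \smallsetminus \{\1\} \bigr)$, with $S = G \smallsetminus \{\1\}$. All I need to do is identify the set $S^*$ of ``rigid'' elements (those fixed by no nontrivial member of~$\caut^0_\1(X)$) and prove that it generates~$G$.

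First I would dispatch the easy case: if $\caut^0_\1(X)$ is trivial, then Corollary~\ref{NoA0=>strong} gives the conclusion immediately. Otherwise, I would fix a nontrivial $\varphi \in \caut^0_\1(X)$ and invoke Theorem~\ref{ClassifThm}. Because $G$ is not a hamiltonian $2$-group by hypothesis, the theorem forces $G$ to be abelian or of dicyclic type, and it forces $\varphi$ to be one of the automorphisms of Example~\fullref{CompleteEgs}{abel} or Example~\fullref{CompleteEgs}{Dic}. Moreover, the theorem actually pins down \emph{every} nontrivial element of~$\caut^0_\1(X)$ in this way, so I would note in passing that $\caut^0_\1(X)$ has at most one nontrivial element (the two listed formulas coincide on the overlap, i.e.\ when $G$ is simultaneously abelian and dicyclic, because then $A$ has exponent~$2$ and $q$ inverts all of~$G$).

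Next I would compute the fixed-point set of this unique $\varphi$ in each case and show it is a \emph{proper subgroup} of~$G$. In the abelian case $\varphi = \iota$, so the fixed set is $\{ g \in G \mid g^2 = \1 \}$, a subgroup (since $G$ is abelian), and it is proper precisely because $\iota \ne \id$. In the dicyclic case $\varphi(q^i a) = q^{-i} a$, so $g$ is fixed iff $q^{2i} = \1$, iff $q^i \in \{\1, z\} \subseteq A$, iff $g \in A$; thus the fixed set is $A$, which has index~$2$. Let $H$ denote this proper subgroup in either case. Since $\varphi$ is the only nontrivial element of~$\caut^0_\1(X)$, we have $S^* = G \smallsetminus H$. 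By Lemma~\ref{propersub}, $\langle G \smallsetminus H \rangle = G$, so $S^*$ generates~$G$, and Lemma~\ref{StrongIfNoStab} finishes the proof.

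The only subtle point, which I would call out, is the claim that $|\caut^0_\1(X)| \le 2$ in the relevant cases. This is where the hypothesis ``$G$ is not a hamiltonian $2$-group'' is doing all the work: it rules out Example~\fullref{CompleteEgs}{Q8}, in which $\caut^0_\1(X)$ can have as many as eight elements and the fixed sets need not be subgroups, so the $S^*$ argument would break down. Once Theorem~\ref{ClassifThm} is in hand, however, the argument is a short bookkeeping exercise rather than a genuine obstacle.
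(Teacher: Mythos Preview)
Your proposal is correct and follows essentially the same route as the paper's own proof: both reduce to Lemma~\ref{StrongIfNoStab}, use Theorem~\ref{ClassifThm} to identify the (at most one) nontrivial element of~$\caut^0_\1(X)$ when $G$ is not a hamiltonian $2$-group, observe that its fixed-point set is a proper subgroup ($\{g:g^2=\1\}$ in the abelian case, $A$ in the dicyclic case), and then invoke Lemma~\ref{propersub}. The only cosmetic differences are that you split off the trivial-stabilizer case via Corollary~\ref{NoA0=>strong} (the paper folds this in implicitly) and that you add a remark reconciling the abelian and dicyclic descriptions when both apply.
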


\begin{proof}
In the notation of Lemma~\ref{StrongIfNoStab}, it suffices to show that $\langle S^* \rangle = G$.

To this end, let $\varphi$ be a nontrivial element of $\caut^0_\1(X)$.
By Theorem~\ref{ClassifThm} (and the assumption that $G$ is not a hamiltonian $2$-group), $\varphi$ must be described in~\pref{CompleteEgs-abel} or~\pref{CompleteEgs-Dic} of Example~\ref{CompleteEgs}. Since each of these parts describes a unique automorphism, this implies that $\varphi$ is the only nontrivial element of $\caut^0_\1(X)$. (Also note that $S = G \smallsetminus \{\1\}$ in the complete graph $\Cay \bigl( G; G \smallsetminus \{\1\} \bigr)$).) Therefore, we have
    \[ \text{$S^* = G \smallsetminus F$, where $F = \{\, g \in G \mid \varphi(g) = g \,\}$} . \]
Since $\varphi$ is nontrivial, we know that $F$ is a proper subset of~$G$. Also, in both~\pref{CompleteEgs-abel} and~\pref{CompleteEgs-Dic} of Example~\ref{CompleteEgs}, it is easy to see that $F$ is a subgroup of~$G$. (In~\pref{CompleteEgs-abel}, it is the subgroup $\{\, x \in G \mid x^2 = \1\,\}$; in~\pref{CompleteEgs-Dic}, it is the subgroup~$A$.) Therefore, it is immediate from Lemma~\ref{propersub} that $\langle S^* \rangle = G$. 
\end{proof}

\section{Colour-permuting automorphisms of complete Cayley graphs} \label{PermSect}

In this section, we prove Theorem~\ref{ColPermOfComplete}.
For the reader's convenience, we state the result again here:

\begin{thmref}{ColPermOfComplete}
\begin{thm} 
If $G$ is any \textup(finite or infinite\textup) group, then every colour-permuting automorphism of the complete graph\/ $\Cay \bigl( G; G \smallsetminus \{\1\} \bigr)$ is the composition of a group automorphism and a colour-preserving graph automorphism. 
\end{thm}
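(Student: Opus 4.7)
My plan is first to reduce to the case $\varphi(\1) = \1$ by writing $\varphi = L_{\varphi(\1)} \compose (L_{\varphi(\1)^{-1}} \compose \varphi)$. Since translations are colour-preserving, and $L_g \compose \alpha = \alpha \compose L_{\alpha^{-1}(g)}$ for any $\alpha \in \Aut G$, a factorization of $L_{\varphi(\1)^{-1}} \compose \varphi$ as (group automorphism) $\compose$ (colour-preserving) repackages into the same type of factorization for~$\varphi$. Once $\varphi(\1) = \1$, my goal is to produce $\alpha \in \Aut G$ satisfying $\varphi(s) \in \{\alpha(s)^{\pm 1}\}$ for every $s \in G \smallsetminus \{\1\}$; then $\alpha^{-1} \compose \varphi$ has trivial induced colour permutation at~$\1$, hence everywhere, and so lies in $\caut^0_\1(X)$.

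If $G$ is not of the form $Q_8 \times B$ with $b^2 = \1$ for all $b \in B$, Corollary~\ref{NotStrongIff} tells me that $X = \Cay(G; G \smallsetminus \{\1\})$ is strongly CCA, so $\varphi$ is already a group automorphism and I take $\alpha = \varphi$. The substantive case is $G = Q_8 \times B$. Set $H = \{\pm\1\} \times B = Z(G)$, which is also the subgroup generated by the involutions of~$G$. Lemma~\ref{order2} makes $\varphi|_H$ a homomorphism, and since $\varphi$ preserves element orders (an easy consequence of $\varphi$ restricting to isomorphisms between colour-cycles), we get $\varphi(H) = H$. Applying Lemma~\ref{phi(gh)} to $s = i$ with $n = 2$ gives $\varphi(-\1) = \varphi(i)^2 = -\1$, since every order-$4$ element of $Q_8 \times B$ squares to~$-\1$; hence $\varphi|_H \in \Aut H$, and $\varphi$ induces a permutation of the three non-identity cosets $iH, jH, kH$ of~$H$.

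I then build $\alpha$ as follows. Set $\alpha|_H := \varphi|_H$, and define $\alpha_0 \colon Q_8 \to G$ by $\alpha_0(i) := \varphi(i)$, $\alpha_0(j) := \varphi(j)$. The images $\varphi(i), \varphi(j)$ are order-$4$ elements lying in distinct non-identity cosets of~$H$, so they anti-commute in~$G$; together with $\varphi(i)^2 = \varphi(j)^2 = -\1$, this verifies the defining relations of~$Q_8$, making $\alpha_0$ a well-defined injective homomorphism. Using the decomposition $G = \bigsqcup_{r \in \{\1, i, j, k\}} rH$, extend to $\alpha(rh) := \alpha_0(r) \, \varphi(h)$; centrality of~$H$ makes $\alpha$ a homomorphism, and $\alpha_0(Q_8) \cap H = \{\pm\1\}$ yields injectivity (so $\alpha \in \Aut G$). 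To verify $\varphi(s) \in \{\alpha(s)^{\pm 1}\}$, Lemma~\ref{order2} factors $\varphi(rh) = \varphi(r) \varphi(h)$, while centrality and the exponent-$2$ of $\varphi(h) \in H$ give $\alpha(rh)^{\pm 1} = \alpha_0(r)^{\pm 1} \, \varphi(h)$, reducing the claim to $\varphi(r) \in \{\alpha_0(r)^{\pm 1}\}$ for $r \in \{i, j, k\}$. This is immediate for $r \in \{i, j\}$; for $r = k$, Lemma~\ref{phi(gh)} (with $g = i$, $s = j$, $n = 1$) gives $\varphi(k) \in \{\varphi(i) \varphi(j)^{\pm 1}\}$, and using $\varphi(j)^2 = -\1$ this equals $\{\alpha_0(k), -\alpha_0(k)\} = \{\alpha_0(k)^{\pm 1}\}$.

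The main obstacle I anticipate is confirming that the two independently defined pieces $\alpha|_H$ and $\alpha_0$ are consistent on their overlap $\{\pm\1\} \subseteq H \cap \alpha_0(Q_8)$, which reduces to the identity $\alpha_0(-\1) = \varphi(-\1) = -\1$; luckily both sides equal~$-\1$. A secondary subtlety is that the construction of~$\alpha_0$ depends on $\varphi(i), \varphi(j)$ lying in distinct nontrivial $H$-cosets, which is forced by the fact that $\varphi$ permutes the four cosets of~$H$ and fixes~$H$ itself. Everything downstream is a careful but routine computation.
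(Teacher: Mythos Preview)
Your proof is correct and takes essentially the same route as the paper: after reducing to $\varphi(\1)=\1$ and invoking Corollary~\ref{NotStrongIff} for the non-exceptional case, both arguments handle $G = Q_8 \times B$ by defining an automorphism (the paper's~$\beta$, your~$\alpha$) via $i \mapsto \varphi(i)$, $j \mapsto \varphi(j)$ together with $\varphi|_{Z(G)}$, and then verifying $\varphi(s)\in\{\alpha(s)^{\pm1}\}$ for every~$s$ so that $\alpha^{-1}\compose\varphi$ is colour-preserving. One small point to tighten: your parenthetical ``(so $\alpha \in \Aut G$)'' jumps from injectivity to bijectivity, which is not automatic when $B$ is infinite --- but surjectivity follows at once from $\varphi(H)=H$ together with the observation that $\alpha_0(1),\alpha_0(i),\alpha_0(j),\alpha_0(k)$ land in the four distinct cosets of~$H$.
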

\end{thmref}

In other words (and in the notation of Notation~\ref{ANotation}), we prove:
	\[ \caut \bigl( \Cay \bigl( G; G \smallsetminus \{\1\} \bigr) \bigr) = \Aut(G) \cdot \caut^0 \bigl( \Cay \bigl( G; G \smallsetminus \{\1\} \bigr) \bigr) . \]
(Note that the order of the factors on the right-hand side does not matter: if $H$ and~$K$ are subgroups of~$G$, such that $G = HK$, then also $G = KH$ (see Lemma~\ref{G=HK}).

\begin{proof}[Proof of Theorem~\ref{ColPermOfComplete}]
Let $K_G = \Cay \bigl( G; G \smallsetminus \{\1\} \bigr)$, and let $\varphi$ be a colour-permuting automorphism of~$K_G$. 
We wish to show that $\varphi$ is the composition of a group automorphism and a colour-preserving graph automorphism.
By composing $\varphi$ with a translation, we may assume, without loss of generality, that $\varphi(\1) = \1$.

If $K_G$ is strongly CCA, then $\varphi$ must be a group automorphism. So the desired conclusion is true in this case.

We may now assume that $K_G$ is not strongly CCA. By the contrapositive of Corollary~\ref{NotStrongIff}, this implies that $G$ is a hamiltonian $2$-group: 
	\[ G = Q_8 \times B , \]
for some abelian group~$B$ of exponent $\le 2$. Let $G_2 = \langle i^2, B \rangle$ be the subgroup generated by the elements of order~$2$ in~$G$. 
(Recall that $i^2 = j^2 = k^2$ is the unique element of order~$2$ in~$Q_8$.) This is the centre of~$G$, and it is an abelian group of exponent~$2$ that has index~$4$ in~$G$.
By Lemma~\ref{order2}, we have
    \[ \text{$\varphi(g x) = \varphi(g)\,  \varphi(x)$ 
    \ for all $g \in G$ and $x \in G_2$.} \]
Therefore 
    \begin{align} \label{ColPermOfCompletePf-G2}
    \text{$\varphi(G_2) = G_2$, and the restriction of~$\varphi$ to~$G_2$ is a group automorphism.}
    \end{align}

Note that $\Cay \bigl( G; \{i^{\pm1}\} \bigr)$ is a disjoint union of $4$-cycles (because $i$ has order~$4$). Hence, the isomorphic graph $\Cay \bigl( G; \{\varphi(i)^{\pm1}\} \bigr)$ is also a disjoint union of $4$-cycles, which means that $\varphi(i)$ has order~$4$. Therefore $\varphi(i)^{\pm2} = i^2$, because $g^2 = i^2$ for all $g \in G$ of order 4. Also, we have $\varphi(i^2) \in \{\varphi(i)^{\pm2}\}$ (see Lemma~\ref{phi(s^k)}). Combining these observations yields 
    \[ \varphi(i^2) = i^2 . \]

Now, let $\widetilde Q_8 = \langle \varphi(i), \varphi(j) \rangle$, and define $\alpha \colon Q_8 \to G$ by 
    \[ \alpha(i^m j^n) = \varphi(i)^m \, \varphi(j)^n . \]
(We will soon see that $\alpha$ is well-defined.)
Since $i$ and~$j$ have order~$4$ (and $\varphi \in \caut_\1(K_G)$), we know that $\varphi(i)$ and $\varphi(j)$ have order~$4$ (by the argument in the preceding paragraph). Also note that $i \notin j \, G_2$, so \eqref{ColPermOfCompletePf-G2} implies that $\varphi(i) \notin \varphi(j) \, G_2$; hence, $\varphi(i)$ does not centralize $\varphi(j)$. However, any two elements of~$G$ either commute or invert each other; therefore $\varphi(i)$ and~$\varphi(j)$ invert each other, so $\varphi(i)$ and $\varphi(j)$ satisfy exactly the same relations as~$i$ and~$j$. We conclude that 
    \begin{align} \label{ColPermOfCompletePf-Q8}
    \text{$\alpha$ is a (well-defined) isomorphism from~$Q_8$ to~$\widetilde Q_8$.}
    \end{align}

Also note that, for all $m,n \in \Z$, Lemma~\ref{phi(gh)} implies
    \[ \varphi(i^m j^n)
        \in \{ \varphi(i^m) \, \varphi(j)^{\pm n} \}
        \subseteq \{ \varphi(i)^{\pm m} \, \varphi(j)^{\pm n} \}
        \subseteq \alpha(Q_8)
        = \widetilde Q_8
        ,\]
so $\varphi(Q_8) = \widetilde Q_8$.
Therefore, since $Q_8 \cap G_2 = \langle i^2 \rangle$, we see that
    \[ \widetilde Q_8 \cap \varphi(G_2) 
    = \varphi(Q_8) \cap \varphi(G_2)
    = \varphi(Q_8 \cap G_2) 
    = \varphi \bigl( \langle i^2 \rangle \bigr)
    = \langle i^2 \rangle
    . \]
Since $\varphi(B) \subseteq \varphi(G_2)$ and $i^2 \notin B$, this implies $\widetilde Q_8 \cap \varphi(B) = \{\1\}$. Also, since the restriction of~$\varphi$ to~$G_2$ is a group automorphism, we know that $\varphi(B)$ is a subgroup of~$G$ and we have $|G : \varphi(B)| = |G : B| = 8$; from this, we can conclude that $\widetilde Q_8 \cdot \varphi(B) = G$. So we see from Lemma~\ref{InternalDirProd} that
    \begin{align} \label{ColPermOfCompletePf-DirProd}
    G = \widetilde Q_8 \times \varphi(B)
    . \end{align}

Define $\beta \colon G \to G$ by 
    \[ \text{$\beta(gb) = \alpha(g) \, \varphi(b)$ 
    \ for $g \in Q_8$ and $b \in B$.} \]
We see from \eqref{ColPermOfCompletePf-G2}, \eqref{ColPermOfCompletePf-Q8}, and~\eqref{ColPermOfCompletePf-DirProd} that $\beta$ is an automorphism of~$G$.

Now, let $\psi = \beta^{-1} \compose \varphi$. To complete the proof, we will show that $\psi$ is a colour-preserving automorphism of\/ $\Cay \bigl( G; G \smallsetminus \{\1\} \bigr)$. (This implies that, as desired, $\varphi = \beta \compose \psi$ is the composition of a group automorphism, namely~$\beta$, and a colour-preserving graph automorphism, namely~$\psi$.) First, note that $\psi$ is a colour-permuting automorphism (because it is the composition of two colour-permuting automorphisms) and that $\psi(\1) = \1$ (because $\beta$ and~$\varphi$ both fix~$\1$).

If $m,n \in \Z$, and $n$ is odd, then conjugation by~$j^n$ inverts~$i^m$, so
    \[ i^m j^{-n} 
    = j^{-n} \cdot (j^n i^m j^{-n} )
    = j^{-n} \cdot i^{-m}
    = (i^m j^n)^{-1}
    . \]
On the other hand, if $n$ is even, then $j^{-n} = j^n$, so $i^m j^{-n} = i^n j^n$. Thus, we have 
    \begin{align} \label{ColPermOfComplete-i^mj^n}
    \text{$i^m j^{-n} \in \{ (i^m j^n)^{\pm1}\}$ for all $m,n \in \Z$} 
    . \end{align}

Hence, for all $q \in Q_8$, we have
    \begin{align*}
    \varphi(q)
    &= \varphi(i^m j^n)
        && \text{(for some $m,n \in \Z$)}
    \\&\in \{ \varphi(i^m) \, \varphi(j)^{\pm n} \}
        && \text{(Lemma~\ref{phi(gh)})}
    \\&= \{ \varphi(i)^m \, \varphi(j)^{\pm n} \}
        && \text{(Lemma~\ref{phi(s^k)} with $g = \1$)}  
    \\&= \{ \beta(i)^m \, \beta(j)^{\pm n} \}
        && \text{(definitions of~$\beta$ and~$\alpha$)}
    \\&= \{ \beta(i^m j^{\pm n}) \}
        && \text{($\beta$ is a group homomorphism)}
    \\&\subseteq \bigl\{ \beta \bigl( (i^m j^n)^{\pm 1} \bigr) \bigr\}
        && \text{(\eqref{ColPermOfComplete-i^mj^n}}
    \\&= \{ \beta(q^{\pm 1}) \}
        && \text{(definition of $m$ and~$n$)}
    \\&= \{ \beta(q)^{\pm 1} \}
        && \text{($\beta$ is a group homomorphism)}
    .\end{align*}
Therefore 
    \begin{align} \label{ColPermOfCompletePf-beta(q)}
    \text{$\beta(q) \in \{ \varphi(q)^{\pm 1} \}$  for all $q \in Q_8$} .
    \end{align} 

Now, for any $s \in G$, we have
        \begin{align*} 
        \beta(s) 
        &= \beta(qb) 
            && \text{(for some $q \in Q_8$ and $b \in B$)}
        \\&= \beta(q)\, \beta(b) 
            && \text{($\beta$ is a group automorphism)}
        \\&\in \{ \varphi(q)^{\pm1} \, \beta(b) \}
            && \text{\eqref{ColPermOfCompletePf-beta(q)}}
        \\&= \{ \bigl(\varphi(q) \, \beta(b) \bigr)^{\pm1} \}
            && \begin{pmatrix}
            \text{$\beta(b) \in G_2$ has order~$1$ or~$2$}
            \\ \text{and is in the centre of~$G$}
            \end{pmatrix}
        \\&= \{ \bigl(\varphi(q) \, \varphi(b) \bigr)^{\pm1} \}
            && \text{(definitions of~$\beta$ and~$\alpha$)}
       \\&= \{ \varphi(qb)^{\pm1} \}
            && \text{(Lemma~\ref{order2})}
       \\&= \{ \varphi(s)^{\pm1} \}
            && \text{(definition of $q$ and $b$)}
        . \end{align*}
Since $\varphi(s^{-1}) = \varphi(s)^{-1}$ (by Lemma~\ref{phi(s^k)} with $g = \1$) and $\beta(s^{-1}) = \beta(s)^{-1}$ (because $\beta$ is a group homomorphism), this implies
    \[ \text{$\varphi \bigl( \{s^{\pm1} \} \bigr) = \beta \bigl( \{s^{\pm1} \}\bigr)$ for all $s \in G$} . \]
Therefore, we have
    \[ \psi \bigl( \{s^{\pm1}\} \bigr) 
        = \beta^{-1}\bigl(\varphi \bigl( \{s^{\pm1}\} \bigr) \bigr)
       =  \beta^{-1}\bigl(\beta \bigl( \{s^{\pm1}\} \bigr) \bigr) 
        = \{s^{\pm1}\} 
        . \]
Hence, for all $g,s \in G$, we see from Lemma~\ref{phi(gh)} that 
    \[ \psi(g s) 
    \in \{\psi(g) \, \psi(s^{\pm1}) \} 
    = \{\psi(g) \, s^{\pm1} \} . \]
This means that $\psi$ is colour-preserving, as desired.
\end{proof}

This theorem yields a direct proof of the implication $(\ref{equiv-isCCA} \Rightarrow \ref{equiv-isStrong})$ of Theorem~\ref{equiv}:

\begin{cor} \label{CompleteCCAisStrong}
If the complete graph\/ $\Cay \bigl( G; G \smallsetminus \{\1\} \bigr)$ is CCA, then it is strongly CCA.
\end{cor}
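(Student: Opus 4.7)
The plan is to deduce the corollary directly from Theorem~\ref{ColPermOfComplete} together with the definition of CCA. The key observation is that the CCA hypothesis controls the colour-preserving factor in the decomposition provided by Theorem~\ref{ColPermOfComplete}, while the group-automorphism factor is automatically affine (take $g = \1$ in the definition of affine). Composing an affine map with a group automorphism yields another affine map, so everything collapses neatly.

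More precisely, let $K_G = \Cay\bigl(G; G \smallsetminus \{\1\}\bigr)$ and let $\varphi$ be an arbitrary colour-permuting automorphism of~$K_G$. By Theorem~\ref{ColPermOfComplete}, we may write $\varphi = \alpha \compose \psi$, where $\alpha \in \Aut G$ and $\psi$ is a colour-preserving automorphism of~$K_G$. Since $K_G$ is CCA by hypothesis, $\psi$ is affine, so there exist $\gamma \in \Aut G$ and $h \in G$ with $\psi(x) = \gamma(hx)$ for all $x \in G$. Then for every $x \in G$,
\[
    \varphi(x) = \alpha\bigl(\gamma(hx)\bigr) = (\alpha \compose \gamma)(hx),
\]
and $\alpha \compose \gamma \in \Aut G$. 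Thus $\varphi$ is affine, so $K_G$ is strongly CCA.

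There is essentially no obstacle: once Theorem~\ref{ColPermOfComplete} is in hand, the corollary is just the trivial remark that $\Aut(G) \cdot \{\text{affine maps}\} = \{\text{affine maps}\}$. (If one preferred to take the factors in the opposite order, writing $\varphi = \psi' \compose \alpha'$ as permitted by Lemma~\ref{G=HK}, the same one-line computation still applies.) So the proof is just a few lines.
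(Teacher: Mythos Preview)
Your proof is correct and follows essentially the same approach as the paper's: invoke Theorem~\ref{ColPermOfComplete} to factor $\varphi$ as a group automorphism composed with a colour-preserving automorphism, use the CCA hypothesis to make the latter affine, and observe that the composition of affine maps is affine. The only cosmetic difference is the order of the factors (the paper writes $\varphi = \psi \compose \alpha$), which, as you note, is immaterial by Lemma~\ref{G=HK}.
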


\begin{proof}
Let $\varphi$ be a colour-permuting automorphism of $\Cay \bigl( G; G \smallsetminus \{\1\} \bigr)$. By Theorem~\ref{ColPermOfComplete}, we may write $\varphi = \psi \compose \alpha$, where $\psi$ is a colour-preserving graph automorphism and $\alpha \in \Aut G$. However, since $\Cay \bigl( G; G \smallsetminus \{\1\} \bigr)$ is CCA, we know that $\psi$ is an affine map. Hence, $\varphi$ is the composition of two affine maps, and is therefore affine.
\end{proof}

\end{document}